   \providecommand{\keywords}[1]{\textbf{\textit{Key words:}} #1}
 \numberwithin{equation}{section}
 \newtheorem{thm}{Theorem}[section]
 \newtheorem{lem}[thm]{Lemma}
 \newtheorem{define}[thm]{Definition}
 \newtheorem{cor}[thm]{Corollary}
 \newtheorem{rmk}[thm]{Remark}
 \newtheorem{ex}[thm]{Example}
  \newtheorem{qu}[thm]{Question}
\begin{document}
\title{\textbf {Morphisms from $\mathbb{P}^m$ to flag varieties}}

\author{Xinyi Fang
\thanks{Department of Mathematics, Shanghai Normal University, Shanghai, 200234, P. R. China, xinyif@shnu.edu.cn}
and Peng Ren \thanks{Shanghai Center for Mathematical Sciences, Fudan University, Jiangwan Campus, Shanghai, 200438, P. R. China, pren@fudan.edu.cn.
The first author is sponsored by National Natural Science Foundation of China (Grant No. 12471040). The second author is supported by NKRD Program of China (No. 2020YFA0713200) and LNMS.
}}

\date{}
\maketitle


\begin{abstract}
	In this paper, we consider the morphisms from projective spaces to flag varieties. We show that the morphisms can only be constant under some special conditions.
 As a consequence, we prove that the splitting types of unsplit uniform $r$-bundles on $\mathbb{P}^m$ can not be $(a_1,\dots,a_1,a_2,\dots,a_{r-k+1})$ for $1\le k\le m-2$ where  $a_1>a_2>\dots>a_{r-k+1}$.
\end{abstract}
\keywords{flag variety, constant map, uniform vector bundle}

\section{Introduction}
 The morphisms between projective varieties are important objects in algebraic geometry. In particular, when one or both of the varieties have some rigidity, the morphisms between them have strong constraints. For example,  Lazarsfeld proved that if there is a surjective morphism from a projective space $X$ to a projective manifold, then the morphism must be an automorphism \cite{lazarsfeld1984some}. The case when $X$ is the hyperquadric was proved in \cite{cho1994smooth} and \cite{paranjape1989self}. Then, Hwang and Mok used a different approach to deal with the case of \emph{rational homogeneous spaces} with Picard rank 1 \cite{hwang1999holomorphic}. They proved that either the morphism is an automorphism or the target manifold is a projective space. 

 The classic method of studying morphisms between projective varieties is to compare the Chow rings of the two varieties. Tango made use of the structure of the cohomology ring of
$\mathbb{P}^m$ to prove the well known result that every morphism from $\mathbb{P}^m$ to $G(l,n+1)$ is constant if $m>n$ \cite{tango1974n}. The same statement with $\mathbb{P}^m$ replaced by an $m$-dimensional projective variety $M$ with $\dim H^{2s}(M,\mathbb{C})=1$ was obtained by Muñoz, Occhetta, and Conde \cite{munoz2012uniform}. Further, they showed that a weaker property of the cohomology ring was required to obtain Tango-type results, which they called \emph{effective good divisibility} \cite{munoz2020splitting}. This concept is a refined version of the \emph{good divisibility} 
introduced by Pan in \cite{pan2015triviality}. By calculating the effective good divisibility of Grassmannians, Naldi and Occhetta proved that the morphisms from $G(l,m)$ to $G(k,n)$ must be constant when $m>n$ \cite{naldi2022morphisms}. Subsequently, Muñoz, Occhetta, and Conde computed the effective good divisibility of rational homogeneous spaces of classic type and gave sufficient conditions for morphisms from projective varieties to  rational homogeneous spaces of classic type to be constant \cite{munoz2022maximal}. Independently, Hu, Li and Liu computed the effective good divisibility of rational homogeneous spaces of general Lie type by different methods \cite{hu2022effective}.


On the other hand, the existence of nonconstant morphisms has attracted the attention of mathematicians. In 2002, Kumar considered a series of results on the morphisms between rational homogeneous spaces and he conjectured that the existence of nonconstant morphisms is related to the semisimple ranks of rational homogeneous spaces (See \cite{kumar2023nonexistence}, Conjecture 5). Recently, Bakshi and Parameswaran have given a partial affirmative answer to Kumar's conjecture in \cite{bakshi2023morphisms}. Occhetta and Tondelli also considered the nonconstant morphisms between Grassmannians \cite{occhetta2023morphisms}.


In this paper, we study the morphisms from projective spaces to flag varieties.
Set $\Delta(A_n):=\{1,2,\ldots,n\}$. Let  $A_n/P(I)$ be a flag variety corresponding to the subset $I\subset \Delta(A_n)$. Suppose
the maximal linear subspaces of $A_n/P(I)$ are $\mathbb{P}^{m-1}$. Then there is a nonconstant map from $\mathbb{P}^{m-1}$ to $A_n/P(I)$. It's then natural to ask whether there exists a nonconstant map from $\mathbb{P}^{m}$ to $A_n/P(I)$. We answer the question in some special cases.

\begin{thm}\label{Main1}
	Fix integers $n$ and $m~(m>1)$. Let $A_n/P(I_i)$ be a flag variety with $\Delta(A_n)\backslash I_i=\{i,i+1,\dots,i+m-3\}$. If one of the following holds
	\begin{enumerate}
		\item[(1)] $i=1$,
		\item[(2)] $i=n-m+3$,
		\item[(3)] $i=2$ and $n=m$ is even,
\end{enumerate}		
	then every morphism from $\mathbb{P}^m$ to $A_n/P(I_i)$ is constant.	
\end{thm}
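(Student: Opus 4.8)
The plan is to assume a morphism $f\colon \mathbb{P}^m\to A_n/P(I_i)$ is nonconstant and force a contradiction by a Chern class computation in the spirit of Tango, concentrated on a single rank-$(m-1)$ tautological bundle. Realize $A_n/P(I_i)$ as the variety of flags whose recorded dimensions are exactly $\{1,\dots,i-1\}\cup\{i+m-2,\dots,n\}$, so that the uncrossed block $\{i,\dots,i+m-3\}$ is the ``gap'' in the flag; let $S_d\subset\mathcal{O}^{n+1}$ be the tautological subbundles. The objects I would track are the sub $A:=f^{*}S_{i-1}$ (rank $i-1$), the quotient $C:=f^{*}(\mathcal{O}^{n+1}/S_{i+m-2})$ (rank $n-i-m+3$), and the gap bundle $W:=f^{*}(S_{i+m-2}/S_{i-1})$, which has rank $(i+m-2)-(i-1)=m-1$. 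Since every dimension below and above the gap is recorded, both $A$ and $C$ carry filtrations by line subbundles (resp. line bundle quotients), so their total Chern classes factor completely over $\mathbb{P}^m$.

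First I would record the Whitney relation coming from $0\subset A\subset f^{*}S_{i+m-2}\subset\mathcal{O}^{n+1}$, namely $c(A)\,c(W)\,c(C)=1$, whence
\[
c(W)=\bigl(c(A)\,c(C)\bigr)^{-1}=\prod_{\rho}(1+\rho H)^{-1},
\]
where $H$ is the hyperplane class on $\mathbb{P}^m$ and $\rho$ runs over the integer Chern roots of $A$ and of $C$. Because $\operatorname{rank}W=m-1<m$ while $H^{m}\neq 0$, the vanishing $c_m(W)=0$ forces the single relation $h_m(\rho)=0$, where $h_m$ denotes the degree-$m$ complete homogeneous symmetric polynomial in the roots. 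The second ingredient is a sign dichotomy obtained by restricting to a general line $\ell\cong\mathbb{P}^1$: as a subbundle of the trivial bundle, $A|_\ell$ splits as a sum of $\mathcal{O}(-a_j)$ with $a_j\ge 0$, while $C|_\ell$, being globally generated, splits as a sum of $\mathcal{O}(b_l)$ with $b_l\ge 0$, and these integers are exactly the roots $\rho$. Moreover $f$ is constant if and only if its restriction to $\ell$ is, equivalently if and only if all $a_j=b_l=0$.

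With this in hand the three cases follow from positivity. In case (1), $i=1$ gives $A=0$, so every root comes from the globally generated $C$ and is $\ge 0$; then $h_m(\rho)=0$ with $\rho\ge 0$ forces every root to vanish, so $f$ is constant. Case (2) is dual: $i=n-m+3$ gives $C=0$, all roots are $\le 0$, and $h_m(\rho)=(-1)^{m}h_m(|\rho|)=0$ again forces vanishing. Case (3) is the delicate one: $i=2$ and $n=m$ leave exactly one root $-a$ from $A$ and one root $b$ from $C$ with $a,b\ge 0$, and the relation becomes $h_m(-a,b)=0$, i.e. $(-1)^{m+1}a^{m+1}=b^{m+1}$; the parity of $m$ is decisive, and the hypothesis that $n=m$ is \emph{even} is precisely what turns this into $-a^{m+1}=b^{m+1}$ and hence $a=b=0$.

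I expect the main obstacle to be the bookkeeping that converts geometry into these clean sign conditions: verifying rigorously that the roots of $A$ and of $C$ are $\le 0$ and $\ge 0$ (via the line restriction), that nonconstancy of $f$ is equivalent to some root being nonzero, and that $c_m(W)=0$ is genuinely the only surviving relation on $\mathbb{P}^m$. This scarcity of relations is also exactly what confines the method to these cases: for a general interior $i$ the roots split into both signs with several summands on each side, and $h_m(\rho)=0$ then admits nonzero mixed solutions, as the odd-$m$ failure of case (3) already illustrates, so the argument cannot conclude.
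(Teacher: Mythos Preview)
Your Whitney setup and the relation $h_m(\rho)=0$ coming from $c_m(W)=0$ are correct, and case~(3) goes through cleanly---indeed more slickly than the paper's recursive argument, since there $A$ and $C$ are already line bundles on $\mathbb{P}^m$, so their degrees genuinely are the two roots and sub/quotient of trivial gives the signs. The gap is in cases~(1) and~(2) when $m$ is odd. Your assertion that the splitting type of $A|_\ell$ or $C|_\ell$ on a line equals the filtration roots $\rho$ on $\mathbb{P}^m$ is false in general: already on $\mathbb{P}^1$ the sequence $0\to\mathcal{O}(-1)\to\mathcal{O}^2\to\mathcal{O}(1)\to 0$ gives the trivial bundle a filtration with subquotients of degrees $\pm 1$. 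In your situation the filtration root is $\rho_j=-f^{*}X_j$ with $X_j=c_1((S_j/S_{j-1})^\vee)$, and for $j\ge 2$ the class $X_j=c_1(\det S_j^\vee)-c_1(\det S_{j-1}^\vee)$ is a \emph{difference} of Schubert divisors and is not effective on the flag variety, so $f^{*}X_j$ can be negative. Restricting to a line cannot rescue this, because on $\mathbb{P}^1$ the constraint $c_m(W)=0$ becomes vacuous.

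What is actually available is only the weaker positivity $\sigma_u(a_1,\dots,a_{n-m+2})\ge 0$ for every $u$: these are Chern classes of $H_{n-m+2}^\vee$, hence pullbacks of effective Schubert cycles (equivalently, Chern classes of the globally generated $A^\vee$ or $C$). For even $m$ no sign input is needed anyway, since $\Sigma_m$ is positive definite on real arguments. For odd $m$ the paper combines $\sigma_u\ge 0$ with the identity
\[
\Sigma_m(a)=\sum_{u+2v=m}\sigma_u(a)\,\Sigma_v(a_1^2,\dots,a_{n-m+2}^2),
\]
so that each summand is a product of two nonnegative quantities and must vanish separately; the $u=1$ term then forces either $a=0$ directly or $\sigma_1(a)=0$, and in the latter case $\Sigma_2(a)+\sigma_2(a)=\sigma_1(a)^2=0$ with both terms nonnegative gives $\Sigma_2(a)=0$ and hence $a=0$. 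Your line-restriction idea does not deliver $\sigma_u\ge 0$, and the stronger statement you rely on---that each individual $a_j\ge 0$---simply does not hold.
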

\begin{thm}\label{Main2}
With the notations as Theorem \ref{Main1}. If $1<i<n-m+3$ and $m$ is odd, then there is a nonconstant map from $\mathbb{P}^m$ to $A_n/P(I_i)$.
\end{thm}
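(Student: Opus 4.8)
The plan is to realize a morphism $\mathbb{P}^m \to A_n/P(I_i)$ as a flag of subbundles of the trivial bundle $\mathcal{O}_{\mathbb{P}^m}^{\oplus(n+1)}$. Here $A_n/P(I_i)$ parametrizes flags of subspaces of $\mathbb{C}^{n+1}$ whose dimensions run over $I_i$, that is, over all of $1,\dots,n$ except the block $\{i,i+1,\dots,i+m-3\}$; so a morphism is the same as a nested chain of subbundles
$$\mathcal{V}_1\subset\cdots\subset\mathcal{V}_{i-1}\subset \mathcal{V}_{i+m-2}\subset\cdots\subset\mathcal{V}_n\subset \mathcal{O}_{\mathbb{P}^m}^{\oplus(n+1)},\qquad \operatorname{rk}\mathcal{V}_j=j,$$
each with locally free quotient, and the morphism is nonconstant as soon as one member $\mathcal{V}_j$ is a nontrivial subbundle. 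Since the missing block has size $m-2$, the only new feature that must be produced is a single \emph{varying} jump of dimension $m-1$, from rank $i-1$ to rank $i+m-2$.

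The crucial use of the hypothesis is that $m$ is odd, so $m+1$ is even and $\mathbb{C}^{m+1}$ carries a nondegenerate alternating form $\omega$. Writing $\mathbb{P}^m=\mathbb{P}(\mathbb{C}^{m+1})$ and letting $\mathcal{O}(-1)\subset \mathbb{C}^{m+1}\otimes\mathcal{O}$ be the tautological line subbundle, each fiber is $\omega$-isotropic because $\omega$ is alternating; hence $\mathcal{O}(-1)\subset \mathcal{O}(-1)^{\perp}$, where $\mathcal{O}(-1)^{\perp}$ is the kernel of the composite $\mathbb{C}^{m+1}\otimes\mathcal{O}\xrightarrow{\ \omega\ }(\mathbb{C}^{m+1})^{*}\otimes\mathcal{O}\twoheadrightarrow\mathcal{O}(1)$, the last arrow being dual to the tautological inclusion. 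This composite is surjective because $\omega$ is nondegenerate, so $\mathcal{O}(-1)^{\perp}$ is a genuine subbundle of rank $m$. Thus I obtain exactly a varying pair of subbundles of a trivial bundle with ranks $1$ and $m$, i.e. the jump of $m-1$ demanded across the missing block.

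I would then assemble the full flag by padding with fixed subspaces on both sides. Choose a direct sum decomposition $\mathbb{C}^{n+1}=F\oplus \mathbb{C}^{m+1}\oplus G$ with $\dim F=i-2$ and $\dim G=n-m-i+2$; both dimensions are nonnegative exactly because $1<i<n-m+3$. Fix complete flags $F_1\subset\cdots\subset F_{i-2}=F$ and $G_1\subset\cdots\subset G_{\dim G}=G$ of constant subspaces, and set
$$\mathcal{V}_j=F_j\ (1\le j\le i-2),\quad \mathcal{V}_{i-1}=F\oplus\mathcal{O}(-1),\quad \mathcal{V}_{i+m-2}=F\oplus\mathcal{O}(-1)^{\perp},\quad \mathcal{V}_{i+m-2+t}=F\oplus\mathcal{O}(-1)^{\perp}\oplus G_t\ (1\le t\le \dim G).$$
Because $F$, $G$ and $\mathbb{C}^{m+1}$ are in direct sum, all these sums are fiberwise direct, so each $\mathcal{V}_j$ is a subbundle of the stated rank and the chain is nested (using $\mathcal{O}(-1)\subset\mathcal{O}(-1)^{\perp}$). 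The member $\mathcal{V}_{i-1}=F\oplus\mathcal{O}(-1)$ already varies with the point, whence the resulting morphism to $A_n/P(I_i)$ is nonconstant.

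The genuinely essential ingredient, and the hard part to isolate, is the parity: a nondegenerate alternating form on $\mathbb{C}^{m+1}$ exists if and only if $m$ is odd, and it is exactly this that yields a base-point-free, varying corank-one subbundle $\mathcal{O}(-1)^{\perp}$ containing the tautological line — equivalently, a well-defined ``a line lies in its own orthogonal hyperplane'' correlation. For $m$ even the alternating form is forced to be degenerate and such a correlation acquires base points, which is the mechanism behind the constancy asserted in Theorem \ref{Main1}(3); so the main conceptual step is recognizing the symplectic form as the correct bridge across the block and checking that $\mathcal{O}(-1)\subset\mathcal{O}(-1)^{\perp}$ is a subbundle inclusion globally. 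The remaining points (ranks, nestedness, and nonconstancy) are routine, and the requirement that the two padding dimensions $i-2$ and $n-m-i+2$ be nonnegative is precisely the interior hypothesis $1<i<n-m+3$.
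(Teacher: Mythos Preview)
Your proposal is correct and is essentially the same argument as the paper's: the paper first observes (Lemma~\ref{F134}) that a symplectic form on $\mathbb{C}^{m+1}$ yields a nonconstant map $\mathbb{P}^m\to A_m/P(1,m)$ via $L\mapsto(L,L^{\perp})$, and then embeds $A_m/P(1,m)$ as a fiber of the projection $A_n/P(I_i)\to A_n/P(J_i)$, which amounts exactly to your ``padding with fixed flags in $F$ and $G$.'' The only cosmetic difference is that the paper's embedding keeps the pieces of rank $\ge i+m-1$ constant (using $F\oplus\mathbb{C}^{m+1}\oplus G_t$ rather than your $F\oplus\mathcal{O}(-1)^{\perp}\oplus G_t$), but your variant works just as well.
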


The morphisms between rational homogeneous spaces are closely related to the classification of \emph{uniform vector bundles}, that is, bundles whose \emph{splitting types} are independent of the chosen line. The notion of a uniform vector bundle appears first in a paper by Schwarzenberger\cite{schwarzenberger1961vector}. Since then, there have been a series of related work\cite{van1971uniform,sato1976uniform,elencwajg1980fibres,ellia1982fibres,ballico1983uniform,du2021vector,munoz2012uniform,du2022vector}. Thanks to \cite{elencwajg1980fibres, sato1976uniform} we know that the only examples of unsplit uniform bundles of rank smaller than or equal to $n$ on $\mathbb{P}^n$ are constructed upon $T_{\mathbb{P}^n}$ (by twisting and dualizing). In 1978, Elencwajg \cite{elencwajg1978fibres} show that unsplit uniform vector bundles of rank 3 over $\mathbb{P}^2$ are of the form $T_{\mathbb{P}^2}(a)\oplus \mathcal{O}_{\mathbb{P}^2}(b)$ or $S^2T_{\mathbb{P}^2}(a)~(a,b\in\mathbb{Z})$. Later, Ballico \cite{ballico1983uniform} and Ellia \cite{ellia1982fibres} independently proved that, up to duality, unsplit uniform $(n+1)$-bundles on $\mathbb{P}^n$ are $T_{\mathbb{P}^n}(a)\oplus \mathcal{O}_{\mathbb{P}^n}(b)$ for $n>2$. However, the classification of higher rank uniform bundles on projective spaces is still widely open.
As an application of Theorem \ref{Main1}, we can prove the following theorem.
\begin{thm}
Let $E$ be a uniform $r$-bundle on $\mathbb{P}^m$ of splitting type
\[
(\underbrace{a_1,\ldots,a_1}_\text{k},a_2,\ldots,a_{r-k+1}),~a_1>a_2>\cdots>a_{r-k+1}.
\]
If $1\le k\le m-2$, then $E$ splits as a direct sum of line bundles.
\end{thm}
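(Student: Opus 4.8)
The plan is to convert the hypothetical non-splitting of $E$ into a nonconstant morphism from $\mathbb{P}^m$ to a flag variety of exactly the shape governed by Theorem \ref{Main1}, and then to read off a splitting once that morphism is forced to be constant.

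First I would work on the incidence variety $\mathbb{F}=\{(x,\ell): x\in \ell\}$ of pointed lines in $\mathbb{P}^m$, with projections $p\colon \mathbb{F}\to \mathbb{P}^m$ (a $\mathbb{P}^{m-1}$-bundle) and $q\colon \mathbb{F}\to \mathbb{G}$ onto the Grassmannian $\mathbb{G}$ of lines (a $\mathbb{P}^1$-bundle). Because $E$ is uniform, $p^{*}E$ restricts on every fibre of $q$ to
\[
\mathcal{O}(a_1)^{\oplus k}\oplus \mathcal{O}(a_2)\oplus\cdots\oplus \mathcal{O}(a_{r-k+1}),
\]
and since $a_1>a_2>\cdots>a_{r-k+1}$ are strictly decreasing, the relative Harder--Narasimhan filtration
\[
0\subset \mathcal{F}_1\subset \mathcal{F}_2\subset\cdots\subset \mathcal{F}_{r-k+1}=p^{*}E
\]
is an honest flag of subbundles, with $\mathcal{F}_1$ of rank $k$ and each successive quotient of rank $1$. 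The distinctness of the slopes after the first block is precisely what guarantees that this filtration is by subbundles and that its flag type is $\{k,k+1,\dots,r-1\}$.

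Next I would relate descent of this flag to the constancy of a classifying morphism. The flag $\mathcal{F}_{\bullet}$ descends to a filtration $0\subset E_1\subset\cdots\subset E_{r-k+1}=E$ by subbundles on $\mathbb{P}^m$ exactly when it is independent of the chosen line, and the obstruction to this is encoded by a morphism to the flag variety $A_{r-1}/P(I)$ with $\Delta(A_{r-1})\setminus I=\{k,k+1,\dots,r-1\}$, i.e. with first marked index $i=k$. This is where Theorem \ref{Main1} enters: the bound $1\le k\le m-2$ is what places the relevant flag varieties, whose first marked index is $i=k$, into one of the admissible cases (1)--(3), rather than the range $1<i<n-m+3$ in which Theorem \ref{Main2} produces genuinely nonconstant maps. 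I expect to organize this as an induction on the rank, at each stage splitting off an extremal graded piece by descending the corresponding extremal window of $m-2$ consecutive members of the flag, arranged so that the window always has $i=k$ or $i=n-m+3$; the inductive hypothesis then keeps every step inside the constant regime of Theorem \ref{Main1}, and constancy forces the flag to descend.

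Once the filtration descends, the splitting is formal. Each graded piece $Q_j=E_j/E_{j-1}$ restricts to $\mathcal{O}(a_j)^{\oplus \mu_j}$ on every line, with $\mu_1=k$ and $\mu_j=1$ for $j\ge 2$; hence $Q_j(-a_j)$ is uniform of trivial splitting type and therefore trivial, giving $Q_1\cong \mathcal{O}(a_1)^{\oplus k}$ and $Q_j\cong \mathcal{O}(a_j)$ for $j\ge 2$. Since $1\le k\le m-2$ forces $m\ge 3$, we have $H^1(\mathbb{P}^m,\mathcal{O}(d))=0$ for every $d$, so every class in $\operatorname{Ext}^1(Q_j,E_{j-1})$ vanishes and each extension in the filtration splits. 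Therefore
\[
E\cong \mathcal{O}(a_1)^{\oplus k}\oplus \mathcal{O}(a_2)\oplus\cdots\oplus \mathcal{O}(a_{r-k+1}),
\]
a direct sum of line bundles.

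The main obstacle is the descent step together with the matching described above: producing the classifying morphism from $\mathbb{P}^m$ with target of the exact flag type $\Delta(A_{r-1})\setminus I=\{k,\dots,r-1\}$, and arranging the induction so that the extremal windows always have first index $i=k$ or $i=n-m+3$, so that Theorem \ref{Main1} applies at every stage. The inequality $1\le k\le m-2$ is dictated precisely by this requirement, and distinguishing the admissible windows from those falling under Theorem \ref{Main2} is the delicate point. By contrast, the final splitting is routine, resting only on the triviality of uniform bundles of zero splitting type and on the vanishing of $H^1$ of line bundles on $\mathbb{P}^m$ for $m\ge 2$.
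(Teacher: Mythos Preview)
Your strategy is the paper's: relative Harder--Narasimhan filtration of $p^{*}E$ on the incidence variety, a classifying map into a flag variety forced constant by Theorem~\ref{Main1}, descent, and splitting via $\mathrm{Ext}^1$-vanishing, all inside an induction on $r$. Two points in your matching with Theorem~\ref{Main1} need correction.

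The classifying morphism has source $\mathbb{P}^{m-1}$, not $\mathbb{P}^m$: it is defined on each $p$-fibre $p^{-1}(x)\cong\mathbb{P}^{m-1}$ by $(x,\ell)\mapsto\bigl(\mathcal{F}_1(x,\ell)\subset\cdots\subset\mathcal{F}_{r-k}(x,\ell)\subset E(x)\bigr)$. Its target is $A_{r-1}/P(k,k+1,\ldots,r-1)$, so in the paper's notation $I=\{k,\ldots,r-1\}$ and $\Delta(A_{r-1})\setminus I=\{1,\ldots,k-1\}$; you have this reversed. The relevant case of Theorem~\ref{Main1} is therefore case~(1) (first index $i=1$), not ``$i=k$'', and it is applied with the ``$m$'' of that theorem equal to $m-1$. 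The hypothesis $k\le m-2$ then becomes $\lvert\Delta\setminus I\rvert=k-1\le(m-1)-2$, which is precisely what lets the argument of Theorem~\ref{main1} run (for $k<m-2$ one may restrict to linear $\mathbb{P}^{k+1}$'s covering $\mathbb{P}^{m-1}$, or simply note that $\Sigma_{m-1}=0$ is still among the Chow-ring relations). Your ``extremal window of $m-2$ consecutive members'' bookkeeping and the appeal to case $i=n-m+3$ are therefore unnecessary.

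A minor structural difference: the paper descends only the corank-one piece $F_{r-k}$ together with the quotient line bundle $p^{*}E/F_{r-k}$, and then invokes the inductive hypothesis on the uniform $(r-1)$-bundle $p_{*}F_{r-k}$. Your final paragraph instead descends the entire flag and splits all the graded pieces at once; that also works once $\varphi$ is known to be constant on every $p$-fibre, and in fact renders the induction on $r$ you announce earlier superfluous.
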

\paragraph{Notation and convention}

\begin{itemize}
	\item $A_n$: the simple Lie group $SL(n+1)$;
	\item $\Delta(A_n)$: the set $\{1,2,\ldots,n\}$;
	\item $A_n/P(d_1,\dots,d_s)$: the flag variety parameterizing increasing sequences of $d_i$-dimensional subspaces $V_{d_i}$ of $\mathbb{C}^{n+1}$;
	\item $H_{d_i}$: the $i$-th universal bundle of rank $d_i$ on $A_n/P(d_1,\dots,d_s)$;
	\item $G(l,n)$ the Grassmannian of $l$-dimensional subspaces of $\mathbb{C}^{n}$;
	\item $\Sigma_u(X_1,\dots,X_v)$: the complete homogeneous symmetric polynomial of degree $u$ in $v$;
	\item $\sigma_u(X_1,\dots,X_v)$: the elementary homogeneous symmetric polynomial of degree $u$ in $v$;
	\item $A^\bullet(X)$: the Chow ring of the variety $X$;

\end{itemize}

\section{Preliminaries}
Throughout this paper, all algebraic varieties and morphisms will be defined over the field $\mathbb{C}$.

\subsection{Flag varieties and Schubert varieties}
We review some well known facts on flag varieties and Schubert varieties and refer to \cite{B-G-G, Brion, eisenbud20163264} for more details. Let $G$ be a semisimple Lie group and $H$ a fixed maximal torus of $G$. Denote their Lie algebras by
$\mathfrak{g}$ and $\mathfrak{h}$ respectively. Let $\Phi$ be its root system and $\Delta=\{\alpha_1,...,\alpha_n\}\subset\Phi$ a set of fixed simple roots. A subgroup $P$ of $G$ for which $G/P$ is projective, called a \emph{parabolic subgroup}, is determined by a set of simple roots of $G$ in the following way: given a subset $I\subset \Delta$, let $\Phi^+_I$ be the subset of $\Phi$ generated by the simple roots in $\Delta\backslash I$. Then the subspace
\begin{align}\label{para}
	\mathfrak{p}(I):=\mathfrak{h}\oplus\sum_{\alpha\in\Phi^+}\mathfrak{g}_{-\alpha}\oplus\sum_{\alpha\in\Phi^+_I}\mathfrak{g}_{\alpha}
\end{align}
is a parabolic subalgebra of $\mathfrak{g}$, determining a parabolic subgroup $P(I)\subset G$. Conversely, every parabolic subgroup is constructed in this way. A projective quotient $G/P(I)$ of a semisimple Lie group is called a \emph{rational homogeneous space}. In particular, $A_n/P(I)$ is called a \emph{flag variety}, where $A_n$ is the simple Lie group $SL(n+1)$. In this paper, we focus on flag varieties and their Schubert varieties.

Set $\Delta(A_n):=\{1,2,\ldots,n\}$. From now on, we will denote by $A_n/P(I)$ a flag variety corresponding to the subset $I\subset \Delta(A_n)$. When $I=\{d_1,\ldots,d_s\}$, $A_n/P(d_1,\ldots,d_s)$ is actually the set of increasing sequences of linear subspaces
\begin{align}
0\subset V_{d_1}\subset\cdots\subset V_{d_s} \subset \mathbb{C}^{n+1}
\end{align}
such that $\dim(V_{d_i})=d_i$ for $i=1,\ldots,s$.
The two extremal cases correspond to the \emph{complete flag varieties} $A_n/P(1,2,\ldots,n)$ and the \emph{Grassmannians} $A_n/P(i)=G(i,n+1)$. We now introduce Schubert varieties in the flag variety $A_n/P(I)$.

Let $W=S_{n+1}$ be the
symmetric group of order $(n+1)!$. It's  the Weyl group of $SL(n+1)$. When $I=\{d_1,\ldots,d_s\}$, we denote the subgroup $W_{P(I)}$ of $W$ by
$$W_{P(I)}=S_{d_1}\times S_{d_2-d_1}\times\cdots\times S_{d_s-d_{s-1}}\times  S_{n+1-d_s}.$$
If we consider the coset space $W/W_{P(I)}$, we can find that each coset in $W/W_{P(I)}$ contains a unique permutation $w$ such that
$w(1)<\cdots<w(d_1), w(d_1+1)<\cdots<w(d_2), \dots, w(d_s+1)<\cdots<w(n+1)$.
Equivalently, $w\preceq wv$ for all $v\in W_{P(I)}$, where $\preceq$ is the Bruhat order (cf. \cite{Brion} Section 1.2). This defines the set $W^{P(I)}$ of minimal representatives of $W/W_{P(I)}$. For every $w\in W^{P(I)}$, the Zariski closure
\[X_w=\overline{B w P(I) / P(I)}\]
is called a 
\emph{Schubert variety}, where $B$ is a Borel subgroup of $G$. 
It's known from Bruhat-Chevalley that every flag variety admits a canonical decomposition into Schubert varieties \cite{B-G-G}.

\subsection{The Chow ring of the flag varieties}
From the paper \cite{guyot1985caracterisation}, we know that the Chow ring of a flag variety has a simple presentation in terms of the polynomial ring and its ideal. Let's recall some concepts first.

Let $F:=A_n/P(1,2,\ldots,n)$ be a complete flag variety. For every integer $i~(1\le i\le n)$, there is a natural projection $F\rightarrow G(i,n+1)$. 
On the complete flag variety $F$, there exists a flag of \emph{universal subbundles}
$$0=H_0\subset H_1\subset H_2\cdots\subset H_n\subset \mathcal{O}_{F}^{\oplus{n+1}},$$
where $H_i$ is the pull back of the universal subbundle on the Grassmannian $G(i,n+1)~(1\le i\le n)$. Denote by $X_i=c_1((H_i/H_{i-1})^\vee)$. Take
\[\mathbb{Z}[X_1,\dots,X_{d_1};\dots;X_{d_{s-1}+1},\dots,X_{d_s}]\]
to be the ring of polynomials in $d_s$ variables with integral coefficients symmetrical in $X_{d_{l-1}+1},\dots,X_{d_{l}}$ $(1\le l\le s)$. Let
\begin{align}
	\Sigma_u(X_1,X_2,\dots,X_v)=\sum_{\substack{\alpha_1+\alpha_2+\dots+\alpha_v=u\\\alpha_i\ge 0}}X_1^{\alpha_1}X_2^{\alpha_2}\dots X_v^{\alpha_v}
\end{align}
be a \emph{complete homogeneous symmetric polynomial} of degree $u$ in $v$ variables
and
\begin{align}
	\sigma_u(X_1,X_2,\dots,X_v)=\sum_{1\le i_1<i_2<\cdots<i_u\le v}X_{i_1}X_{i_2}\cdots X_{i_u},
\end{align}
an \emph{elementary symmetric polynomial} of degree $u$ in $v$ variables.
\begin{thm}[See \cite{guyot1985caracterisation} Theorem 3.2]\label{chowring}
	The Chow ring of the flag variety $A_n/P(d_1,\ldots,d_s)$ is  \[\mathbb{Z}[X_1,\dots,X_{d_1};\dots;X_{d_{s-1}+1},\dots,X_{d_s}]/I,\]
	where $I$ is the ideal generated by the set of polynomials $\Sigma_i(X_1,\dots,X_{d_s})$ for $n+2-d_s \leq i\leq n+1$.
\end{thm}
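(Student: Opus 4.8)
The plan is to realize $A_n/P(d_1,\ldots,d_s)$ as an iterated Grassmann bundle over a point and to apply the Grassmann-bundle description of Chow rings (the Leray--Hirsch and projective-bundle formula, in its Grassmann-bundle form) at each stage. Concretely, projection to $G(d_s,n+1)$ exhibits the flag variety as the bundle of $(d_1,\ldots,d_{s-1})$-flags inside the rank-$d_s$ universal subbundle $H_{d_s}$; iterating this peels off one Grassmann bundle at a time down to a point. At each stage the Grassmann-bundle formula says that the Chow ring of the total space is generated over that of the base by the Chern classes of the new universal sub- and quotient bundles, subject only to the Whitney relation $c(\mathrm{subbundle})\,c(\mathrm{quotient})=c(\mathrm{ambient\ bundle})$. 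Pulling back to the complete flag variety $F$ (where the line bundles $H_i/H_{i-1}$ and hence the classes $X_i$ are genuine) and invoking the splitting principle, these generators are identified with the elementary symmetric functions of the $X_i$ inside each block $X_{d_{l-1}+1},\ldots,X_{d_l}$; these are exactly the Chern classes of $(H_{d_l}/H_{d_{l-1}})^\vee$, so the block-symmetric polynomial ring in the statement is precisely the subring of $A^\bullet(F)$ generated by the universal Chern classes.

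Next I would pin down the relations. Dualizing the tautological sequence $0\to H_{d_s}\to\mathcal{O}^{\oplus(n+1)}\to Q\to 0$ gives $0\to Q^\vee\to\mathcal{O}^{\oplus(n+1)}\to H_{d_s}^\vee\to 0$, and since $X_1,\ldots,X_{d_s}$ are the Chern roots of $H_{d_s}^\vee$ one obtains
\[
c(Q^\vee)=c(H_{d_s}^\vee)^{-1}=\prod_{i=1}^{d_s}(1+X_i)^{-1}=\sum_{u\ge 0}(-1)^u\,\Sigma_u(X_1,\ldots,X_{d_s}).
\]
Because $Q^\vee$ has rank $n+1-d_s$, its Chern classes vanish in all degrees exceeding $n+1-d_s$, so $\Sigma_u(X_1,\ldots,X_{d_s})=0$ for every $u\ge n+2-d_s$. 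This produces all of the stated relations and shows that $I$ is contained in the kernel of the tautological surjection onto the Chow ring.

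To see that the generators $\Sigma_u$ with $n+2-d_s\le u\le n+1$ already cut out the whole ideal of relations I would invoke the Newton-type identity $\sum_{j=0}^{u}(-1)^j\,\sigma_j\,\Sigma_{u-j}=0$ for $u\ge 1$, together with the vanishing $\sigma_j(X_1,\ldots,X_{d_s})=0$ for $j>d_s$. This rewrites $\Sigma_u$ for $u\ge n+2$ as a $\mathbb{Z}[X]$-combination of the $d_s$ consecutive predecessors $\Sigma_{u-1},\ldots,\Sigma_{u-d_s}$, so an induction shows every $\Sigma_u$ with $u>n+1$ lies in the ideal generated by $\Sigma_{n+2-d_s},\ldots,\Sigma_{n+1}$. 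Note that the range $n+2-d_s\le u\le n+1$ contains exactly $d_s$ consecutive indices, which is precisely what makes this recursion close up; this explains why the generating set is truncated at $n+1$ rather than running over all $u$.

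Finally I would upgrade the surjection $\mathbb{Z}[X_1,\ldots,X_{d_s}]^{W_{P(I)}}/I\twoheadrightarrow A^\bullet(A_n/P(d_1,\ldots,d_s))$ to an isomorphism by a rank comparison. The Chow ring is a free $\mathbb{Z}$-module whose rank is the number of Schubert cells, the multinomial coefficient $\tbinom{n+1}{d_1,\,d_2-d_1,\,\ldots,\,n+1-d_s}$; on the algebraic side, the left-hand ring is a graded complete intersection whose Hilbert series I would compute (the product of $(1-t^u)$ over the generator degrees divided by the product over variable degrees) and check equals the Poincaré polynomial of the flag variety, a telescoping product of Gaussian binomials coming from the Grassmann-bundle tower. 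Since a graded surjection between free $\mathbb{Z}$-modules that agree in each degree is forced to be an isomorphism (equivalently, a surjection $\mathbb{Z}^N\twoheadrightarrow\mathbb{Z}^N$ is injective), this completes the argument. The main obstacle is exactly this last step: the vanishing relations are immediate, whereas proving that $I$ is the \emph{entire} relation ideal — that no further relations are needed — is the crux and rests on the careful bookkeeping of ranks through the Grassmann-bundle tower, or equivalently on matching Hilbert series with the Poincaré polynomial.
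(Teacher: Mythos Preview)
The paper does not supply its own proof of this theorem: it is quoted verbatim from Guyot \cite{guyot1985caracterisation} and used as a black box throughout, so there is no argument in the paper to compare your proposal against.

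That said, your sketch is a reasonable and essentially standard route to the result. A couple of points worth tightening. First, when you write $\mathbb{Z}[X_1,\ldots,X_{d_s}]^{W_{P(I)}}$, note that $W_{P(I)}=S_{d_1}\times\cdots\times S_{d_s-d_{s-1}}\times S_{n+1-d_s}$ includes a factor $S_{n+1-d_s}$ acting on variables $X_{d_s+1},\ldots,X_{n+1}$ that are not present; the block-symmetric ring in the statement is really invariants under $S_{d_1}\times\cdots\times S_{d_s-d_{s-1}}$ only. Second, your rank-count step asserts that the quotient is a graded complete intersection, i.e.\ that $\Sigma_{n+2-d_s},\ldots,\Sigma_{n+1}$ form a regular sequence in the block-symmetric polynomial ring; this is true but is exactly the nontrivial input (equivalent to the Chevalley--Shephard--Todd freeness of coinvariants, or to the telescoping Gaussian-binomial count you allude to), and you should make explicit which of these you are invoking rather than leaving it implicit in ``compute the Hilbert series.'' With those two clarifications the argument goes through.
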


There is an important class of algebraic cycles in the Chow rings of flag varieties, which play an important role in the proof of our main theorems.
\begin{lem}\label{cycle}
	The cycles
	\[\sigma_{t}(X_1,X_2,\dots,X_{d_l})=\sum_{1\le i_1<i_2<\cdots<i_t\le d_l}X_{i_1}X_{i_2}\cdots X_{i_t},\text{ for } 1\le l\le s, ~1\leq t\leq d_l, \]in $A_n/P(d_1,\ldots,d_s)$ are Schubert varieties.
\end{lem}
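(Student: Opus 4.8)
The plan is to recognize each cycle $\sigma_t(X_1,\dots,X_{d_l})$ as a Chern class of a universal bundle and then exhibit an explicit Schubert variety with that class. By the very definition $X_i=c_1\bigl((H_i/H_{i-1})^\vee\bigr)$, the variables $X_1,\dots,X_{d_l}$ are the Chern roots of the dual universal subbundle $H_{d_l}^\vee$ (pull back to the complete flag variety $F$, where $H_{d_l}$ carries the full filtration, and use that the induced map on Chow rings is injective). Since an elementary symmetric polynomial in the Chern roots is the corresponding Chern class, and since $\sigma_t(X_1,\dots,X_{d_l})$ is symmetric in the full range $X_1,\dots,X_{d_l}$, hence in particular within each block, it descends to a well-defined element of $A^\bullet(A_n/P(d_1,\dots,d_s))$ equal to
\[
\sigma_t(X_1,\dots,X_{d_l})=c_t\bigl(H_{d_l}^\vee\bigr),\qquad 1\le t\le d_l.
\]
Thus the lemma reduces to showing that the Chern classes of the dual of the rank-$d_l$ universal subbundle are represented by Schubert varieties.

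Next I would reduce to the Grassmannian. Since $\{d_l\}\subset\{d_1,\dots,d_s\}$, there is a natural projection $\pi_l:A_n/P(d_1,\dots,d_s)\to A_n/P(d_l)=G(d_l,n+1)$, which is $A_n$-homogeneous, smooth, and surjective with irreducible fibers (each fiber is a product of flag varieties of flags inside $V_{d_l}$ and inside $\mathbb{C}^{n+1}/V_{d_l}$). By construction $H_{d_l}=\pi_l^\ast S$, where $S$ is the tautological rank-$d_l$ subbundle on $G(d_l,n+1)$, so $c_t(H_{d_l}^\vee)=\pi_l^\ast\,c_t(S^\vee)$. It therefore suffices to (i) represent $c_t(S^\vee)$ on the Grassmannian by a Schubert variety and (ii) show that $\pi_l$ carries Schubert classes to Schubert classes. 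For (i) I would invoke the classical dictionary between Chern classes of the tautological bundles and special Schubert classes (see, e.g., \cite{eisenbud20163264}): fixing a complete flag $F_1\subset F_2\subset\cdots\subset F_{n+1}=\mathbb{C}^{n+1}$, the class $c_t(S^\vee)$ is the column-type special Schubert class represented by the codimension-$t$ Schubert variety
\[
\bigl\{\,\Lambda\in G(d_l,n+1):\dim(\Lambda\cap F_{\,n-d_l+t})\ge t\,\bigr\}.
\]
The single incidence condition here already forces the whole chain $\dim(\Lambda\cap F_{\,n-d_l+i})\ge i$ for $1\le i\le t$, because $F_{\,n-d_l+i}$ has codimension $t-i$ in $F_{\,n-d_l+t}$; this is the partition $(1^t)$, of codimension $t$.

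For (ii), I would use that $\pi_l$ is $B$-equivariant: the reduced preimage $\pi_l^{-1}(X_w)$ of a Schubert variety $X_w\subset G(d_l,n+1)$ is closed and $B$-stable, and it is irreducible because $\pi_l$ is a fibration with irreducible fibers over the irreducible base $X_w$; hence $\pi_l^{-1}(X_w)$ is again a Schubert variety (a single $B$-orbit closure in $A_n/P(d_1,\dots,d_s)$). Since $\pi_l$ is flat and the preimage is reduced, $[\pi_l^{-1}(X_w)]=\pi_l^\ast[X_w]$, so no spurious multiplicities arise. Combining the three steps gives $\sigma_t(X_1,\dots,X_{d_l})=c_t(H_{d_l}^\vee)=\pi_l^\ast c_t(S^\vee)=[\pi_l^{-1}(X_w)]$, a Schubert class, as required.

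I expect the main obstacle to be bookkeeping rather than any single deep point: first, fixing conventions so that the degree-$t$ class $c_t(S^\vee)$ is matched with the \emph{column} partition $(1^t)$ (and not the row $(t)=c_t(Q)$), which is where the dual is essential; and second, justifying cleanly that the preimage of a single Schubert variety under $\pi_l$ is again a single Schubert variety with the correct class, i.e.\ that irreducibility and reducedness of $\pi_l^{-1}(X_w)$ hold and that $\pi_l$ is genuinely flat. Once these conventions and the flatness of the projection are pinned down, the identification chain $\sigma_t(X_1,\dots,X_{d_l})=c_t(H_{d_l}^\vee)=\pi_l^\ast c_t(S^\vee)$ closes the argument.
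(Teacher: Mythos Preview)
Your proposal is correct and follows essentially the same route as the paper: identify $\sigma_t(X_1,\dots,X_{d_l})=c_t(H_{d_l}^\vee)$, pull back from the Grassmannian via $\pi_l$, invoke that $c_t(S^\vee)$ is a special Schubert class there, and note that $\pi_l$-preimages of Schubert varieties are Schubert varieties. The paper's own proof is terser---it simply asserts the last step without your $B$-equivariance/irreducibility/flatness argument---so your extra care there is welcome but not a departure in strategy.
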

\begin{proof}
First we notice that for every integer $l~(1\le l\le s)$, there is a natural projection
$$\pi_l: A_n/P(d_1,\ldots,d_s)\rightarrow G(d_l,n+1)$$	
such that the $l$-th universal subbundle $H_{d_l}$ on $A_n/P(d_1,\ldots,d_s)$ is the pullback of the unique universal subbundle on $G(d_l,n+1)$. By Whitney's formula and the splitting
principle, we have that the total Chern class of $H_{d_l}^\vee$ on $A_n/P(d_1,\ldots,d_s)$ is
	\[c(H_{d_l}^\vee)=(1+X_1)\cdots(1+X_{d_l})=1+\sigma_1(X_1,\dots,X_{d_l})+\dots+\sigma_{d_l}(X_1,\dots,X_{d_l}).\]
	On the other hand, we already know that
	every Chern class of the universal subbundle on a Grassmannian is a Schubert variety (cf. \cite{eisenbud20163264} Section 5.6). Since the pullback of the Schubert varieties under the map $\pi_l$ are still Schubert varieties, this concludes the proof.
\end{proof}

\section{Morphisms from projective spaces to flag vareities}
Let $A_n/P(I)$ be a flag variety. It's classically known that $A_n/P(I)$ is covered by projective lines and each class of lines can be realized geometrically by considering the double fibration (see \cite{landsberg2003projective} Section 4). In fact, for higher dimensional linear spaces on $A_n/P(I)$, we can still describe their geometries in terms of Tits fibrations. Fix an integer $m~(m\ge 2)$. If 
$\#(\Delta(A_n)\backslash I)=m-2$, then \cite{landsberg2003projective} Corollary 4.10 tells us that the dimension of largest linear spaces on $A_n/P(I)$ is less than or equal to $m-1$. In particular, if $\Delta(A_n)\backslash I$ is a set of consecutive integers, then the largest linear space on $A_n/P(I)$ is exactly a $\mathbb{P}^{m-1}$. In this paper, we focus on the case that  $\Delta(A_n)\backslash I$ is a set of consecutive integers. Since $\mathbb{P}^{m-1}$ is the largest linear space on $A_n/P(I)$, there is a noncontant map from $\mathbb{P}^{m-1}$ to $A_n/P(I)$. Further, we naturally consider the maps between $\mathbb{P}^{m}$ and $A_n/P(I)$. We start with a special case.

\begin{thm}\label{main1}
Fix integers $n$ and $m~(m>1)$. Let $A_n/P(1,2,\dots,n-m+2)$ be a flag variety. Then every morphism from $\mathbb{P}^m$ to $A_n/P(1,2,\dots,n-m+2)$ is constant.	
\end{thm}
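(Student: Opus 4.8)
The plan is to pull the defining relations of the Chow ring back along a morphism $f\colon\mathbb{P}^m\to F'$, where $F':=A_n/P(1,2,\dots,n-m+2)$, and to combine the one surviving relation with positivity coming from global generation. Write $A^\bullet(\mathbb{P}^m)=\mathbb{Z}[h]/(h^{m+1})$. By Theorem \ref{chowring} the ideal $I$ is generated in degrees $\ge m=n+2-d_s$ (here $d_s=n-m+2$), so the degree-one part of $A^\bullet(F')$ is freely generated by $X_1,\dots,X_{n-m+2}$ and I may set $f^*X_i=a_ih$ with $a_i\in\mathbb{Z}$. Since each $X_1+\cdots+X_l=c_1(\det H_l^\vee)$ is globally generated and suitable positive combinations of these are ample, $f$ is constant if and only if $f^*$ annihilates every $X_i$; hence it suffices to prove $a_1=\cdots=a_{n-m+2}=0$.

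The first step extracts the single genuine equation. The lowest-degree generator of $I$ is $\Sigma_m(X_1,\dots,X_{n-m+2})$, which is exactly $c_m$ of the rank-$(m-1)$ universal quotient $Q=\mathcal{O}^{\oplus(n+1)}/H_{n-m+2}$ and so vanishes in $A^\bullet(F')$. Applying $f^*$ gives $\Sigma_m(a_1,\dots,a_{n-m+2})\,h^m=0$, and because $h^m\neq0$ this yields the Diophantine constraint
\[
\Sigma_m(a_1,\dots,a_{n-m+2})=0 .
\]
The higher generators $\Sigma_i$ with $i>m$ pull back into degrees where $h^i=0$, so they give no further information.

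The second step feeds in positivity. By Lemma \ref{cycle} the classes $\sigma_t(X_1,\dots,X_l)$ are Schubert varieties, hence effective; equivalently each $H_l^\vee$ and each quotient $\mathcal{O}^{\oplus(n+1)}/H_l$ is globally generated. Pulling an effective class of codimension $t\le m$ back to $\mathbb{P}^m$ gives a non-negative multiple of $h^t$, so for every $l$
\[
\sigma_t(a_1,\dots,a_l)\ge0\ \ (1\le t\le\min(l,m)),\qquad \Sigma_u(a_1,\dots,a_l)\ge0\ \ (1\le u\le m),
\]
the only equality being $\Sigma_m(a_1,\dots,a_{n-m+2})=0$. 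When $n-m+2\le m$ the combinatorics then closes immediately: the inequalities $\sigma_t(a_1,\dots,a_{n-m+2})\ge0$ for all $t$ make the coefficients of $\prod_i(x-a_i)$ alternate in sign, so evaluating at a negative $x$ shows there are no negative roots; thus all $a_i\ge0$, and $\Sigma_m(a)=0$ forces each monomial $a_i^m$ to vanish, giving $a_i=0$.

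The main obstacle is exactly the regime $n-m+2>m$ (i.e. $n\ge2m-1$), where $H_{n-m+2}^\vee$ has rank larger than $\dim\mathbb{P}^m$ and the effective classes $\sigma_t$ are only visible for $t\le m$: I obtain $\sigma_t(a)\ge0$ up to $t=m$ but not up to $t=n-m+2$, so the sign-rule argument bounds only the first $m$ entries $a_1,\dots,a_m\ge0$ and not the tail $a_{m+1},\dots,a_{n-m+2}$. The crux will be to promote non-negativity of the first $m$ entries to all entries by an induction along the flag, using the prefix inequalities $\Sigma_u(a_1,\dots,a_l)\ge0$, the recursion $\Sigma_u(a_1,\dots,a_l)=\Sigma_u(a_1,\dots,a_{l-1})+a_l\,\Sigma_{u-1}(a_1,\dots,a_l)$, and the terminal equality $\Sigma_m(a_1,\dots,a_{n-m+2})=0$, together with the fact that the $a_i$ are genuine real numbers (so the relevant generating polynomial has only real roots). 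For even $m$ there is a shortcut, since $\Sigma_m$ is positive definite and $\Sigma_m(a)=0$ alone forces $a=0$; the difficulty is therefore concentrated in the odd-$m$, large-$n$ case, which is consistent with the existence of nonconstant maps for interior parabolics established in Theorem \ref{Main2}.
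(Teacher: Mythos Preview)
Your setup, the extraction of the single relation $\Sigma_m(a_1,\dots,a_{n-m+2})=0$, and the even-$m$ case via positive definiteness of $\Sigma_m$ all match the paper. Your treatment of the range $n-m+2\le m$ (forcing all $a_i\ge 0$ from $\sigma_t\ge 0$ for every $t$, then reading off $a_i^m=0$) is correct and in fact a little slicker than what the paper does in that sub-range.

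The genuine gap is the odd-$m$, large-$n$ case $n-m+2>m$. Here you only outline a plan (``promote non-negativity of the first $m$ entries to all entries by an induction along the flag, using the prefix inequalities and the recursion''), but you never carry it out, and it is not clear that those ingredients alone close the argument. Note that your proposed endgame is also the wrong target: the paper never proves $a_i\ge 0$ for all $i$; it goes straight to $a_i=0$.

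The missing idea is a single symmetric-function identity that bypasses the problem entirely. One checks, via the generating functions $\prod(1-X_it)^{-1}=\prod(1+X_it)\cdot\prod(1-X_i^2t^2)^{-1}$, that
\[
\Sigma_m(X_1,\dots,X_k)=\sum_{u+2v=m}\sigma_u(X_1,\dots,X_k)\,Q_v(X_1,\dots,X_k),\qquad Q_v:=\Sigma_v(X_1^2,\dots,X_k^2).
\]
Every $Q_v(a)\ge 0$ trivially, and since $u\le m$ in each summand, every $\sigma_u(a)\ge 0$ by the effectivity you already established (Lemma~\ref{cycle}); no $\sigma_t$ with $t>m$ is needed. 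Hence $\Sigma_m(a)=0$ forces each summand to vanish, in particular $\sigma_1(a)\,Q_{(m-1)/2}(a)=0$. If $Q_{(m-1)/2}(a)=0$ then all $a_i=0$; otherwise $\sigma_1(a)=0$, and Newton's relation $\sigma_1^2=\Sigma_2+\sigma_2$ together with $\Sigma_2(a)\ge 0$ and $\sigma_2(a)\ge 0$ gives $\Sigma_2(a)=0$, hence again $a_i=0$. This is exactly the paper's argument, and it handles all $n$ uniformly without any induction along the flag.
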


\begin{proof}
Let $I=\{1,2,\dots,n-m+2\}$ and $f$ a morphism from $\mathbb{P}^m$ to $A_n/P(I)$. According to Theorem \ref{chowring}, we know that the Chow ring of $A_n/P(I)$ has $n-m+2$ generators $X_1,X_2,\dots,X_{n-m+2}$ 
and
 \[A^\bullet(A_n/P(I))=\mathbb{Z}[X_1;X_2;\dots;X_{n-m+2}]/<\Sigma_i(X_1,\dots,X_{n-m+2})>,  i=m,\dots,n+1.\]
Since  $$A^\bullet(\mathbb{P}^m)=\mathbb{Z}[\mathcal{H}]/<\mathcal{H}^{m+1}>,$$ we can assume $$f^*(X_i)=a_i,~(1\le i\le n-m+2)$$
 for some $a_i\in \mathbb{Z}$. Since $\Sigma_m(X_1,\dots,X_{n-m+2})=0$ in $A^\bullet(A_n/P(I))$, we have

 \begin{align}\label{se}
 \Sigma_m(a_1,\dots,a_{n-m+2})=f^*(\Sigma_m(X_1,\dots,X_{n-m+2}))=0.
 \end{align}
We separate our discussion into two cases.

Case I: $m$ is even. By Theorem 1.(i) in \cite{Tao2017}, we always have $\Sigma_m(a_1,\dots,a_{n-m+2})\geq 0$, and the equality holds (see equation (\ref{se})) if and only if $a_i=0$, which implies that $f$ must be a constant map.

Case II: $m$ is odd. We first prove the following identity.

\textbf{Claim.}\label{tech} For any integers $j$ and $k~(1\le j\le k)$, we have
	\[\Sigma_j(X_1,\dots,X_k)=\sum_{u+2v=j}\sigma_u(X_1,\dots,X_k)Q_v(X_1,\dots,X_k),\]
where $Q_v(X_1,\dots,X_k)=\Sigma_v(X_1^2,\dots,X_k^2)$.

In fact, the proof of the claim relies on the generating functions of the complete homogeneous polynomials and elementary homogeneous polynomials.
	The generating functions of $\Sigma_j$, $\sigma_u$, and $Q_v$ are the formal power series:
\[
\sum_{j=0}^{\infty}\Sigma_j(X_1,\dots,X_k)t^j=\prod\limits_{i=1}^{k}\frac{1}{(1-X_it)},
\]

\[
\sum_{u=0}^{\infty}\sigma_u(X_1,\dots,X_k)t^j=\prod\limits_{i=1}^{k}(1+X_it),
\]	
	and
\[
	\sum_{v=0}^{\infty}Q_v(X_1,\dots,X_k)t^{2v}=\sum_{v=0}^{\infty}\Sigma_v(X_1^2,\dots,X_k^2)t^{2v}=\prod\limits_{i=1}^{k}\frac{1}{(1-X_i^2t^2)}.
	\]
It is easy to see that the right-hand side of the first equation is equal to the product of the right-hand sides of the second and third equations.
	By analyzing the coefficients on the left-hand sides of the equations above, we can get the claimed identity.

By the above claim and equation (\ref{se}), we have
\begin{align}\label{se2}
0=\Sigma_m(a_1,\dots,a_{n-m+2})=\sum_{u+2v=m}\sigma_u(a_1,\dots,a_{n-m+2})Q_v(a_1,\dots,a_{n-m+2}).
\end{align}
Since
\begin{align}\label{se3}
Q_v(a_1,\dots,a_{n-m+2})=\Sigma_v(a_1^2,\dots,a_{n-m+2}^2)=\sum_{\alpha_1+\dots+\alpha_{n-m+2}=v}a_1^{2\alpha_1}\dots a_{n-m+2}^{2\alpha_{n-m+2}},
\end{align}
we have $Q_v(a_1,\dots,a_{n-m+2})\geq 0$.

On the other hand, by Lemma \ref{cycle}, we know that $\sigma_u(X_1,\dots,X_{n-m+2})$ represents a Schubert cycle. Therefore, $\sigma_u(a_1,\dots,a_{n-m+2})=f^*\sigma_u(X_1,\dots,X_{n-m+2})$ is also an algebraic cycle. Thus, we have \[\sigma_u(a_1,\dots,a_{n-m+2})\geq0.\]
Together with (\ref{se2}) and (\ref{se3}), it is easy to see
\[\sigma_u(a_1,\dots,a_{n-m+2})Q_v(a_1,\dots,a_{n-m+2})=0\]
for any $u$, $v~(u+2v=m)$. In particular,
\[
\sigma_1(a_1,\dots,a_{n-m+2})Q_{\frac{m-1}{2}}(a_1,\dots,a_{n-m+2})=0,
\]
which implies that either \[
Q_{\frac{m-1}{2}}(a_1,\dots,a_{n-m+2})=0~\text{or}~ \sigma_1(a_1,\dots,a_{n-m+2})=0.
\]
If the former condition holds,  
 it's obvious that $a_i=0$ for any $1\leq i\leq n-m+2$ from (\ref{se3}).

If the latter condition happens, we have
\[
\Sigma_2(a_1,\dots,a_{n-m+2})+\sigma_2(a_1,\dots,a_{n-m+2})=\sigma_1^2(a_1,\dots,a_{n-m+2})=0.
\]
Since $\sigma_2(a_1,\dots,a_{n-m+2})\ge 0$ and $\Sigma_2(a_1,\dots,a_{n-m+2})\ge 0$ (see \cite{Tao2017} Theorem 1.(i)), we have
\[
\Sigma_2(a_1,\dots,a_{n-m+2})=0
\]
and hence $a_i=0$ for any $1\leq i\leq n-m+2$.
Arguing as above we conclude that $f$ must be a constant map.
\end{proof}
\begin{rmk}
When $m=2$, $3$ or $n+1$, Theorem \ref{main1} leads to some known results.
\begin{enumerate}
	\item[(i)] When $m=2$, Theorem \ref{main1} tells us that every morphism from $\mathbb{P}^2$ to the complete flag variety $A_n/P(1,2,\ldots,n)$ is constant (see  \cite{bakshi2023morphisms} Theorem 1.1).
	\item[(ii)] When $m=3$, every morphism from $\mathbb{P}^3$ to  $A_n/P(1,2,\ldots,n-1)$ is constant (see also \cite{bakshi2023morphisms} Theorem 1.3).
	\item[(iii)] When $m=n+1$, every morphism from $\mathbb{P}^{n+1}$ to $\mathbb{P}^{n}$ is constant. This leads to the well known result that
	 morphisms from $\mathbb{P}^{a}$ to $\mathbb{P}^{b}$ must be constant whenever $a>b$.
\end{enumerate}
\end{rmk}

By the duality of $\mathbb{C}^{n+1}$, there is a natural isomorphism
\[
A_n/P(1,2,\dots,n-m+2)\cong A_n/P(m-1,m,\dots,n).
\]
Hence we have the following corollary.
\begin{cor}\label{cor}
	There is no nonconstant map from $\mathbb{P}^m$ to $A_n/P(m-1,m,\dots,n)$.
\end{cor}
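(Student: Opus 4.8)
The plan is to deduce the statement directly from Theorem \ref{main1} by exploiting the duality isomorphism
\[
A_n/P(1,2,\dots,n-m+2)\cong A_n/P(m-1,m,\dots,n)
\]
recorded just above the corollary. The whole point is that dualizing the ambient space $\mathbb{C}^{n+1}$ turns the flag variety parameterizing subspaces of dimensions $m-1,m,\dots,n$ into the one parameterizing subspaces of the complementary dimensions, in reverse order, so that the problem is transported into the setting already handled by Theorem \ref{main1}.

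First I would make the isomorphism $\phi\colon A_n/P(m-1,m,\dots,n)\to A_n/P(1,2,\dots,n-m+2)$ explicit. Sending a flag $V_{m-1}\subset V_m\subset\cdots\subset V_n$ in $\mathbb{C}^{n+1}$ to the flag of annihilators $V_n^{\perp}\subset V_{n-1}^{\perp}\subset\cdots\subset V_{m-1}^{\perp}$ in $(\mathbb{C}^{n+1})^{\vee}$ reverses inclusions and replaces each dimension $d$ by $n+1-d$. Since $\{\,n+1-d : d\in\{m-1,m,\dots,n\}\,\}=\{1,2,\dots,n-m+2\}$, this is a well-defined isomorphism of flag varieties, which is precisely the content of the displayed isomorphism preceding the corollary.

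Then, given any morphism $f\colon\mathbb{P}^m\to A_n/P(m-1,m,\dots,n)$, I would form the composite $g:=\phi\circ f\colon\mathbb{P}^m\to A_n/P(1,2,\dots,n-m+2)$. By Theorem \ref{main1}, every morphism from $\mathbb{P}^m$ to $A_n/P(1,2,\dots,n-m+2)$ is constant, so $g$ is constant. As $\phi$ is an isomorphism, $f=\phi^{-1}\circ g$ is constant as well, which is exactly the assertion of the corollary.

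There is essentially no serious obstacle here: the only point requiring care is the index bookkeeping in the duality, namely checking that complementation $d\mapsto n+1-d$ carries the index set $\{m-1,\dots,n\}$ exactly onto $\{1,\dots,n-m+2\}$ and that the reversal of the flag is compatible with the labeling of the universal bundles $H_{d_i}$. Once the isomorphism $\phi$ is in place, Theorem \ref{main1} does all of the real work, and the conclusion is immediate.
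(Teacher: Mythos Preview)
Your proposal is correct and follows exactly the paper's own approach: the corollary is deduced immediately from Theorem \ref{main1} via the duality isomorphism $A_n/P(1,2,\dots,n-m+2)\cong A_n/P(m-1,m,\dots,n)$ stated just before it. You simply spell out the annihilator map and the composition argument in more detail than the paper does.
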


Let $I_i$ be a subset of $\Delta(A_n)$ whose complement is a set of consecutive integers $\{i,i+1,\dots,i+m-3\}$. From Theorem \ref{main1} and Corollary \ref{cor}, we know that there is no nonconstant map from $\mathbb{P}^m$ to $A_n/P(I_i)$ when $i=1$ or $n-m+3$. We are now ready to consider the general cases $1<i<n-m+3$ in terms of the parity of $m$. When $m$ is even, we can handle the extreme case where $n=m$ and $i=2$.
\begin{thm}\label{main2} 	
	If $m$ is even, then every morphism from $\mathbb{P}^m$ to $A_m/P(1,m)$ is constant. 	
\end{thm}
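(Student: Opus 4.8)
The plan is to distill from the Chow ring a single degree-$m$ relation between the two natural degree-one generators, and then to observe that after pulling it back to $\mathbb{P}^m$ the parity of $m$ leaves only the trivial solution. Recall that $A_m/P(1,m)$ is the point--hyperplane incidence variety inside $\mathbb{P}^m\times(\mathbb{P}^m)^{\vee}$, with the two projections $\pi_1,\pi_2$ induced by $H_1$ and $H_m$. Write $\alpha:=X_1=c_1(H_1^\vee)$ and $\beta:=c_1(H_m^\vee)=\sigma_1(X_1,\dots,X_m)$ for the two generators of $A^1$; these are $\pi_1^*\mathcal{H}$ and $\pi_2^*\mathcal{H}$. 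Given $f\colon\mathbb{P}^m\to A_m/P(1,m)$, Lemma \ref{cycle} shows $\alpha$ and $\beta$ are Schubert classes, so $f^*\alpha=a\mathcal{H}$ and $f^*\beta=b\mathcal{H}$ with $a,b\ge 0$. Since $a=0$ forces $\pi_1\circ f$ to be constant and $b=0$ forces $\pi_2\circ f$ to be constant, it suffices to prove $a=b=0$.

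First I would produce the key relation. By the same generating-function manipulation used for the Claim in the proof of Theorem \ref{main1}, the defining relations $\Sigma_i(X_1,\dots,X_m)=0$ for $i\ge 2$ are equivalent, through $\prod_{j=1}^m(1-X_jt)^{-1}\equiv 1+\beta t$, to the identities $\sigma_k(X_1,\dots,X_m)=\beta^k$ for all $k$; in particular $c(H_m^\vee)=(1-\beta)^{-1}$. Since $(H_m/H_1)^\vee$ has rank $m-1$, its top Chern class satisfies $c_m((H_m/H_1)^\vee)=0$. Computing
\[
c\big((H_m/H_1)^\vee\big)=\frac{c(H_m^\vee)}{c(H_1^\vee)}=\frac{1}{(1-\beta)(1+\alpha)}
\]
and extracting the degree-$m$ part yields the relation
\[
\sum_{q=0}^{m}(-1)^q\alpha^q\beta^{m-q}=0 \quad\text{in } A^\bullet(A_m/P(1,m)).
\]

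Now I would pull this back by $f$ and invoke the parity of $m$. Applying $f^*$, using $f^*\alpha=a\mathcal{H}$, $f^*\beta=b\mathcal{H}$ and $A^m(\mathbb{P}^m)\cong\mathbb{Z}\cdot\mathcal{H}^m$, gives the numerical identity $\sum_{q=0}^m(-1)^qa^qb^{m-q}=0$. For $m$ even this sum equals $(a^{m+1}+b^{m+1})/(a+b)$ (for $a+b>0$), so $a^{m+1}+b^{m+1}=0$; as $a,b\ge 0$ this forces $a=b=0$, and hence $f$ is constant.

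The main obstacle is to locate the relation that actually carries information. Pulling back the defining relations $\Sigma_i(X_1,\dots,X_m)=0$ directly is vacuous: the quotient $\mathcal{O}_{A_m/P(1,m)}^{\oplus(m+1)}/H_m$ is a line bundle, so $f^*\Sigma_i=0$ for every morphism and every $i\ge 2$, giving no constraint on $a,b$. The point is therefore to exploit the finer filtration $H_1\subset H_m$: the forced shape $c(f^*H_m^\vee)=(1-b\mathcal{H})^{-1}$ together with the rank bound $c_m(f^*(H_m/H_1)^\vee)=0$ is what genuinely couples $a$ and $b$, and the odd exponent $m+1$ (equivalently the sign $(-1)^{m+1}=-1$) is exactly what excludes nonzero nonnegative solutions. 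For $m$ odd the identical computation only yields $a=b$, consistent with the existence of nonconstant maps in that range.
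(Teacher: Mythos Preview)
Your proof is correct, and it takes a genuinely different route from the paper's.

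The paper works with the generators $a=f^*X_1$ and $b_j=f^*\sigma_j(X_2,\dots,X_m)$, deduces from the vanishing of $\Sigma_i(X_2,\dots,X_m)$ for $i\ge 3$ the three-term recursion
\[
b_l=b_1b_{l-1}+(a^2+ab_1)\,b_{l-2},
\]
proves by induction that $b_l\ge 0$ for $l$ even and $b_1b_l\ge 0$ for $l$ odd, and then uses $b_m=0$ together with a short case analysis to force $a=b_1=\cdots=b_{m-1}=0$.

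You instead pass to the two natural degree-one classes $\alpha=c_1(H_1^\vee)$ and $\beta=c_1(H_m^\vee)$, convert the presentation into $\sigma_k=\beta^k$, and extract a \emph{single} relation $\sum_{q=0}^m(-1)^q\alpha^q\beta^{m-q}=0$ as the vanishing of $c_m$ of the rank-$(m-1)$ bundle $(H_m/H_1)^\vee$. After pullback this factors as $(a^{m+1}+b^{m+1})/(a+b)=0$, and parity plus nonnegativity finish in one line. Your argument is shorter, makes the role of the parity of $m$ completely transparent, and exploits the point--hyperplane incidence description of $A_m/P(1,m)$ in an essential way; the paper's argument, by contrast, stays closer to the symmetric-function machinery used throughout (and in particular parallels the proof of Theorem~\ref{main1}), at the cost of a recursion and some bookkeeping. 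Both rely on the same effectivity input from Lemma~\ref{cycle}.
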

\begin{proof}
	Let $m=2k$. In this case, we have:
	\[A^\bullet(A_{2k}/P(1,2k))=\mathbb{Z}[X_1;X_2,\dots,X_{2k}]/<\Sigma_i(X_1,\dots,X_{2k})>,~i=2,\dots,2k+1.\]
	Suppose $f$ is a morphism from $\mathbb{P}^{2k}$ to $A_{2k}/P(1,2k)$. Let
	$$f^*(X_1)=a~\text{and}~f^*\sigma_j(X_2,\dots,X_{2k})=b_j,~1\le j\le 2k-1.$$
	(By convention we set $b_0=1$ and $b_{2k}=0$.) By the effectivity of Schubert varieties, we have $a\ge 0$ and $a+b_1\ge 0$.
	
	Considering the generating function of the complete homogeneous polynomials, we find
	\[\sum_{i=0}^{\infty}\Sigma_i(X_2,\dots,X_{2k})t^i=(1-X_1t)\sum_{i=0}^{\infty}\Sigma_i(X_1,\dots,X_{2k})t^i.\]
	Thus
	\[\Sigma_i(X_2,\dots,X_{2k})=\Sigma_i(X_1,\dots,X_{2k})-X_1\Sigma_{i-1}(X_1,\dots,X_{2k})~\text{for}~i>1.\]
	Since $\Sigma_i(X_1,\dots,X_{2k})=0~(i>1)$ in $A^\bullet(A_{2k}/P(1,2k))$, we have
	\begin{align}\label{seq1}
		\Sigma_2(X_2,\dots,X_{2k})=-X_1\Sigma_1(X_1,\dots,X_{2k}),
	\end{align}
	and
	\begin{align}\label{seq2}
		\Sigma_i(X_2,\dots,X_{2k})=0 \text{ for } i\geq 3.
	\end{align}
	From equation (\ref{seq1}), we deduce that
	\begin{align}\label{seq3}
		f^*\Sigma_2(X_2,\dots,X_{2k})=-a(a+b_1).
	\end{align}
	On the other hand, the equation $$\sum_{i=0}^{\infty}\Sigma_i(X_2,\dots,X_{2k})t^i\cdot \sum_{i=0}^{\infty}\sigma_i(X_2,\dots,X_{2k})(-t)^i=1$$
	implies that for any $l\ge 1$,
	\[\sum_{i=0}^l(-1)^i\sigma_i(X_2,\dots,X_{2k})\Sigma_{l-i}(X_2,\dots,X_{2k})=0.\]
	So, using (\ref{seq2}) we have
	\begin{small}
		\[\sigma_l(X_2,\dots,X_{2k})=\sigma_{l-1}(X_2,\dots,X_{2k})\Sigma_1(X_2,\dots,X_{2k})-\sigma_{l-2}(X_2,\dots,X_{2k})\Sigma_{2}(X_2,\dots,X_{2k}).\]
	\end{small}
	Applying $f^*$ to the above sequence and combining (\ref{seq3}) we get
	\begin{align}\label{seq4}
		b_l=b_{l-1}b_1+b_{l-2}(a^2+ab_1)~(l\ge 2).
	\end{align}
	By induction on $l$, it is not hard to see that\begin{enumerate}
		\item[1)] if $l$ is odd, then $b_1b_l\geq0$;
		\item[2)] if $l$ is even, then $b_l\geq0.$
	\end{enumerate}
	Especially, $b_{2k-1}b_1\ge 0$ and $b_{2k-2}(a^2+ab_1)\ge 0$. Since
	\[0=b_{2k}=b_{2k-1}b_1+b_{2k-2}(a^2+ab_1),\]
	we have \[b_{2k-1}b_1=b_{2k-2}(a^2+ab_1)=0.\]
	If $a^2+ab_1\ne 0$, then $b_{2k-2}$ would be equal to $0$. Applying the above  argument repeatedly to (\ref{seq4}), we can get
	\[
	b_{2k-2}=b_{2k-4}=\cdots=b_2=0.
	\]
	Since $b_2=b_1^2+a^2+ab_1$, $a$ and $b_1$
	would be equal to $0$ contrary to hypothesis. Hence $a^2+ab_1=0$. Using (\ref{seq4}), we have $b_l=b_1^l$ for any $l\ge 1$. Since $b_{2k-1}b_1=0$, we necessarily have $b_l=0$ for any $l\ge 1$ and $a=0$. Hence $f$ is a constant map.
\end{proof}

By combining Theorem \ref{main1}, Corollary \ref{cor} and Theorem \ref{main2}, we arrive at the first main theorem (Theorem \ref{Main1}) of our paper.

When $m$ is odd, we will show that there is a nonconstant morphism from $\mathbb{P}^m$ to $A_n/P(I_i)$. To this end, we first prove the following lemma and the proof is similar to the proof of Lemma 5.3 in \cite{bakshi2023morphisms}.
\begin{lem}\label{F134}
	There is a nonconstant morphism from $\mathbb{P}^{2k+1}$ to $A_{2k+1}/P(1,2k+1)$.
\end{lem}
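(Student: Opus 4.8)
The plan is to write down an explicit nonconstant morphism, exploiting the fact that the ambient vector space $\mathbb{C}^{2k+2}$ has even dimension. Recall that $A_{2k+1}/P(1,2k+1)$ parameterizes the flags $0\subset V_1\subset V_{2k+1}\subset\mathbb{C}^{2k+2}$ with $\dim V_1=1$ and $\dim V_{2k+1}=2k+1$; equivalently, it is the incidence variety $\{([v],H):[v]\in H\}\subset\mathbb{P}^{2k+1}\times(\mathbb{P}^{2k+1})^\vee$ of points and hyperplanes in $\mathbb{P}(\mathbb{C}^{2k+2})$. Starting from $\mathbb{P}^{2k+1}=\mathbb{P}(\mathbb{C}^{2k+2})$, I would send a line $V_1=\langle v\rangle$ to itself in the first factor, and then produce a hyperplane through $[v]$ varying algebraically with $v$ in the second factor.

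The crucial device is a non-degenerate alternating (symplectic) bilinear form $\omega$ on $\mathbb{C}^{2k+2}$, which exists precisely because the dimension $2k+2$ is even. For $0\neq v\in\mathbb{C}^{2k+2}$ I set $v^{\perp}=\{w:\omega(v,w)=0\}$. Since $\omega$ is non-degenerate, $v^{\perp}$ is a hyperplane, so $\dim v^{\perp}=2k+1$; and since $\omega$ is alternating, $\omega(v,v)=0$, hence $v\in v^{\perp}$. Therefore $0\subset\langle v\rangle\subset v^{\perp}\subset\mathbb{C}^{2k+2}$ is a genuine point of $A_{2k+1}/P(1,2k+1)$, and I define $\phi([v])=(\langle v\rangle,v^{\perp})$. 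The self-orthogonality $v\in v^{\perp}$ is exactly the incidence condition guaranteeing that the image lands inside the flag variety, and this is precisely what would fail if one only had a symmetric form; this mirrors the symplectic construction in Lemma 5.3 of \cite{bakshi2023morphisms}.

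It then remains to verify that $\phi$ is a well-defined morphism and is nonconstant. Well-definedness is immediate, since replacing $v$ by $\lambda v$ changes neither $\langle v\rangle$ nor $v^{\perp}$. The second coordinate is the projectivization of the linear isomorphism $\mathbb{C}^{2k+2}\xrightarrow{\sim}(\mathbb{C}^{2k+2})^\vee$, $v\mapsto\omega(v,-)$ (an isomorphism because $\omega$ is non-degenerate), so both coordinates of $\phi$ are morphisms, whence $\phi$ is a morphism into the product $\mathbb{P}^{2k+1}\times(\mathbb{P}^{2k+1})^\vee$, with image contained in the incidence variety by the previous paragraph. Finally, the first coordinate of $\phi$ is the identity of $\mathbb{P}^{2k+1}$, so $\phi$ is manifestly nonconstant. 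I do not expect a serious obstacle: the entire content is the observation that even dimension furnishes a non-degenerate alternating form whose self-orthogonality supplies the required incidence, the remaining checks being routine linear algebra.
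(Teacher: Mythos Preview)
Your proof is correct and is essentially identical to the paper's: both use a non-degenerate symplectic form on $\mathbb{C}^{2k+2}$ to send a line $L$ to the flag $L\subset L^{\perp}$, with the alternating property guaranteeing $L\subset L^{\perp}$. You have simply spelled out in more detail the well-definedness, regularity, and nonconstancy that the paper leaves implicit.
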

\begin{proof}
	Let $V$ be a vector space of dimension $2k+2$. Since there exists a symplectic form in $V$, every one- dimensional subspace $L$ is orthogonal to a $(2k+1)$-dimensional subspace $L^\perp$ containing $L$. Thus, we have a nonconstant morphism from $\mathbb{P}^{2k+1}$ to $A_{2k+1}/P(1,2k+1)$ by sending $L$ to $\left(L, L^{\perp}, V\right)$.
\end{proof}
Our second main result is as follows.
\begin{thm}\label{main3}
Fix integers $n$ and $m~(m>1)$. Let $A_n/P(I_i)$ be a flag variety with $\Delta(A_n)\backslash I_i=\{i,i+1,\dots,i+m-3\}$. 	
If $m$ is odd and $1<i<n-m+3$, then there is a nonconstant map from $\mathbb{P}^m$ to $A_n/P(I_i)$.
\end{thm}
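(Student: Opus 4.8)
The plan is to generalize the symplectic construction of Lemma \ref{F134} so that it produces the single large jump of the flag inside a larger ambient space, with everything else held fixed. First I would record the shape of the target: since $\Delta(A_n)\setminus I_i=\{i,\dots,i+m-3\}$, the set $I_i$ consists of the two consecutive blocks $\{1,\dots,i-1\}$ and $\{i+m-2,\dots,n\}$, so a point of $A_n/P(I_i)$ is a flag
\[
0\subset V_1\subset\cdots\subset V_{i-1}\subset V_{i+m-2}\subset\cdots\subset V_n\subset\mathbb{C}^{n+1},
\]
whose only jump of size larger than one is the jump of size $m-1$ from $V_{i-1}$ to $V_{i+m-2}$. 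The goal is to realize precisely this jump by a symplectically orthogonal pair of subspaces.

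Next I would fix a direct sum decomposition $\mathbb{C}^{n+1}=A\oplus W\oplus B$ with $\dim A=i-2$, $\dim W=m+1$, and $\dim B=n-i-m+2$. The two strict inequalities are exactly what is needed: $1<i$ forces $\dim A\ge 0$, and $i<n-m+3$ forces $\dim B\ge 0$, while $m$ odd makes $\dim W=m+1$ even, so that $W$ can be equipped with a nondegenerate symplectic form $\omega$. I would also fix once and for all complete flags $0\subset A_1\subset\cdots\subset A_{i-2}=A$ and $0\subset B_1\subset\cdots\subset B_{n-i-m+2}=B$. The degenerate cases $A=0$ (when $i=2$) and $B=0$ (when $i=n-m+2$) cause no trouble.

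Then, for a line $L\in\mathbb{P}(W)\cong\mathbb{P}^m$, I would set $V_j=A_j$ for $1\le j\le i-2$, then $V_{i-1}=A\oplus L$ and $V_{i+m-2}=A\oplus L^\perp$, where $L^\perp$ is the symplectic orthogonal of $L$ inside $W$ (of dimension $m$), and finally $V_{i+m-2+t}=A\oplus L^\perp\oplus B_t$ for $1\le t\le n-i-m+2$. The key elementary points to check are that every line is isotropic, so $L\subseteq L^\perp$ and the chain is a genuine flag, and that $L\mapsto L^\perp$ is a morphism: it is the composite of the classical duality morphism $\mathbb{P}(W)\to G(m,W^\ast)$, $L\mapsto\mathrm{Ann}(L)$, with the linear isomorphism $W^\ast\cong W$ induced by $\omega$. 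A direct dimension count then gives $\dim V_j=j$ throughout and $\dim V_n=n$, so the assignment defines a morphism $\mathbb{P}^m\to A_n/P(I_i)$, and it is nonconstant because $V_{i-1}=A\oplus L$ recovers $L$ as $V_{i-1}\cap W$, so distinct lines yield distinct flags.

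The only real care needed is the bookkeeping — matching the dimensions $i-2$, $m+1$, $n-i-m+2$ to the two inequalities and to the parity of $m$, and confirming that the chain genuinely lands in $A_n/P(I_i)$. I expect this to be routine rather than deep; Lemma \ref{F134} is recovered as the special case $A=B=0$, $n=m$, $i=2$. The main obstacle is therefore conceptual: recognizing that the unique large jump of the flag can be supplied by a single symplectic orthogonality relation, with the remainder of the flag carried along by the fixed complementary flags in $A$ and $B$.
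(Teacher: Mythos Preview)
Your proof is correct and uses essentially the same idea as the paper: the paper observes that the natural projection $A_n/P(I_i)\to A_n/P(J_i)$, where $\Delta(A_n)\setminus J_i=\{i-1,\dots,i+m-2\}$, has fibres isomorphic to $A_m/P(1,m)$ and then quotes Lemma~\ref{F134} to map $\mathbb{P}^m$ nonconstantly into one such fibre, whereas you unpack this same construction explicitly via the splitting $\mathbb{C}^{n+1}=A\oplus W\oplus B$. The only cosmetic difference is that in your version the top part of the flag $V_{i+m-1},\dots,V_n$ moves with $L$ rather than being held fixed; replacing your $V_{i+m-2+t}=A\oplus L^\perp\oplus B_t$ by $A\oplus W\oplus B_{t-1}$ would recover the paper's map exactly.
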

\begin{proof}
Since $1<i<n-m+3$, $i-1\ge 1$ and $i+m-2\le n$. Let's consider the flag variety $A_n/P(J_i)$ with $\{1,2,\dots,n\}\backslash J_i=\{i-1,i,\dots,i+m-2\}$. It's not hard to see that there is a natural projection  $$\pi:A_n/P(I_i)\rightarrow A_n/P(J_i)$$ and every $\pi$-fiber is isomorphic to $A_m/P(1,m)$. So we can identify $A_m/P(1,m)$ as a subspace of $A_n/P(I_i)$. Since $m$ is odd, according to Lemma \ref{F134}, we can construct a nonconstant map from $\mathbb{P}^m$ to $A_m/P(1,m)$ and hence the composite map $\mathbb{P}^m\rightarrow A_n/P(I_i)$ is nonconstant as well.
\end{proof}
\section{Applications}
A straightforward consequence of Theorem \ref{Main1} is the following result.
\begin{cor}
 Let $X$ be a flag variety as in Theorem \ref{Main1}. Let $Y=A/P_J$ be a flag variety with at most $(m-1)$ consecutive integers in $\Delta(A)\backslash J$. Then every morphism from $Y$ to $X$ is a constant map.
\end{cor}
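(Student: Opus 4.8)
The plan is to deduce the statement from Theorem~\ref{Main1} by a sweeping argument, reducing constancy on $Y$ to constancy on the projective spaces that sweep it out. The hypothesis on the consecutive integers in $\Delta(A)\backslash J$ is exactly what guarantees, through the Tits fibration description recalled at the beginning of Section~3 (see \cite{landsberg2003projective}, Corollary~4.10), that $Y$ is covered by a family of linear subspaces each isomorphic to $\mathbb{P}^m$: a block of $m-1$ consecutive integers in $\Delta(A)\backslash J$ forces one of the universal subbundles to vary as a point (or as a hyperplane) inside an $(m+1)$-dimensional quotient, and the resulting loci are honest linear $\mathbb{P}^m$'s covering $Y$. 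I would first make this family explicit, in the spirit of the proof of Theorem~\ref{main3}, by producing a projection $\pi\colon Y\to Y'$ onto a smaller flag variety whose fibers are precisely these $\mathbb{P}^m$'s.

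Given a morphism $f\colon Y\to X$, the second step is to restrict $f$ to each fiber $\Lambda\cong\mathbb{P}^m$ of $\pi$. Since $X$ is a flag variety of the form occurring in Theorem~\ref{Main1}, that theorem applies verbatim to $f|_{\Lambda}\colon\mathbb{P}^m\to X$ and forces it to be constant; hence $f$ contracts every $\pi$-fiber. By the rigidity lemma (the fibers of $\pi$ being connected and $\pi_*\mathcal{O}_Y=\mathcal{O}_{Y'}$), $f$ then factors through $\pi$, that is $f=\bar f\circ\pi$ for some morphism $\bar f\colon Y'\to X$.

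The last step is to globalize, and this is where I expect the real work to lie. Factoring through a single projection only replaces $Y$ by the smaller flag variety $Y'$, so to conclude that $f$ itself is constant I would iterate the construction, forgetting one universal subbundle at a time and descending $f$ through each projection with $\mathbb{P}^m$-fibers, until the base reduces to a point. The crux is a bookkeeping argument showing that the combinatorial hypothesis on $Y$ persists under these projections, so that at every stage there remains a subbundle whose forgetful fiber is a $\mathbb{P}^m$ (or a larger projective space, into which one first embeds a linear $\mathbb{P}^m$ before invoking Theorem~\ref{Main1}); equivalently, that the covering family of $\mathbb{P}^m$'s chain-connects $Y$. Once this termination is established the conclusion is immediate. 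Thus the main obstacle is entirely the geometry of the sweeping family and its connectivity, rather than any new cohomological input, the arithmetic heart of the matter having already been settled in Theorem~\ref{Main1}.
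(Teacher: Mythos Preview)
Your proposal and the paper share the same core: restrict $f$ to the linear $\mathbb{P}^m$'s that sweep out $Y$ and apply Theorem~\ref{Main1}. The paper, however, carries this out in two lines: it cites \cite{landsberg2003projective} (Theorem~4.9 and Corollary~4.10) for the fact that $Y$ is covered by $\mathbb{P}^m$'s, observes that by Theorem~\ref{Main1} no nonconstant map $\mathbb{P}^m\to X$ exists, and concludes directly. The chain-connectedness issue you rightly flag is left implicit in that citation and in the rational chain-connectedness of flag varieties.

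Your iterated-descent route is a legitimate alternative but heavier than necessary, and one step needs care. Forgetting a single universal subbundle $V_{d_i}$ gives a projection whose fiber is the Grassmannian $G(d_i-d_{i-1},\,d_{i+1}-d_{i-1})$, which is a projective space only when one of the two adjacent gaps equals $1$; in a case such as $Y=G(3,7)$ no forgetful map from $Y$ itself has projective-space fibers at all. To obtain genuine $\mathbb{P}^m$-fibers one must first pass to a finer flag lying over $Y$ and then project sideways (as in the proof of Theorem~\ref{main3}), so the ``bookkeeping'' you anticipate is an actual construction rather than a purely combinatorial check. Once that is set up your argument goes through, but it essentially reproves the Landsberg--Manivel covering statement by hand; the paper simply quotes it and is done.
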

\begin{proof}
Since there are at most $(m-1)$ consecutive integers in $\Delta(A)\backslash J$, $Y$ is covered by $\mathbb{P}^m$ due to \cite{landsberg2003projective} Theorem 4.9 and Corollary 4.10. By Theorem \ref{Main1}, we know that there is no nonconstant map from $\mathbb{P}^m$ to $X$ and hence no nonconstant map from $Y$ to $X$.
\end{proof}

The morphisms from projective spaces to Grassmannians have a long history and it has a nice application in the classification of uniform vector bundles on projective spaces (see \cite{munoz2012uniform, du2021vector, munoz2012uniform, sato1976uniform} for more details).
 Inspired by this, we will use the first main result of our paper to prove the following theorem. To formulate our theorem, we recall a bit of notation now.

 Let $E$ be a rank $r$ bundle on $\mathbb{P}^m$. According to the theorem of Grothendieck, for every $l\in G(2,m+1)$ there is an $r$-tuple
 \[
 a_E(l)=(a_1(l),\ldots,a_r(l))\in\mathbb{Z}^r;~a_1(l)\ge\cdots\ge a_r(l)
 \]
 with $E|L\cong \oplus_{i=1}^{r}\mathcal{O}_L(a_i(l))$. In this way the mapping
 \[
 a_E:G(2,m+1)\rightarrow \mathbb{Z}^r
 \]
 is defined, $a_E(l)$ is called the splitting type of $E$ on $L$.
 \begin{define}
 $E$ is uniform if $a_E$ is constant.
 \end{define}
\begin{ex}
Let $T_{P^m}$ be the tangent bundle of $\mathbb{P}^m$ and $H$ a hyperplane in $\mathbb{P}^m$. Then $N_{H/\mathbb{P}^n}=\mathcal{O}_H(1)$ and we have the exact sequence
$$0\rightarrow T_H\rightarrow T_{\mathbb{P}^m}|H\rightarrow\mathcal{O}_H(1)\rightarrow 0.$$
Since $Ext^1_{H}(\mathcal{O}_H(1),T_H)=0$, the above exact sequence splits, i.e.
$$ T_{\mathbb{P}^m}|H\cong T_H\oplus \mathcal{O}_H(1).$$	
It's known that $T_L=\mathcal{O}_L(2)$ for every line $L\subset\mathbb{P}^m$, so it follows by induction over $m$ that the tangent bundle $T_{\mathbb{P}^m}$ is uniform of splitting type $(2,1,\ldots,1)$.	
\end{ex}
Inspired by the above example, a natural question arises:
\begin{qu}
    Is there an unsplit uniform bundle on $\mathbb{P}^m$ whose splitting type containing k identical numbers $(1\leq k\leq m-2)?$
\end{qu}
 The following theorem partially answers this question.
\begin{thm}\label{main4}

Let $E$ be a uniform $r$-bundle on $\mathbb{P}^m$ of splitting type
\[
(\underbrace{a_1,\ldots,a_1}_\text{k},a_2,\ldots,a_{r-k+1}),~a_1>a_2>\cdots>a_{r-k+1}.
\]
If $1\le k\le m-2$, then $E$ splits as a direct sum of line bundles.
\end{thm}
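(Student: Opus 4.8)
The plan is to encode the splitting type as a flag of subbundles on the universal family of lines, use Theorem \ref{Main1} to show that this flag is constant along the fibers over $\mathbb{P}^m$ so that it descends to a flag of subbundles of $E$ itself, and then split $E$ by a formal cohomological argument. Concretely, let $\mathbb{F}=\{(x,L):x\in L\}$ be the incidence variety of points and lines, with projections $p\colon\mathbb{F}\to\mathbb{P}^m$ and $q\colon\mathbb{F}\to G(2,m+1)$, so the $q$-fibers are the lines $L$ and the $p$-fibers are the $\mathbb{P}^{m-1}$ of lines through a fixed point. Since $E$ is uniform, $E|_L$ has constant splitting type, and the relative Harder--Narasimhan filtration of $p^*E$ with respect to $q$ consists of genuine subbundles $0=\mathcal{F}_0\subset\mathcal{F}_1\subset\cdots\subset\mathcal{F}_{r-k}\subset p^*E$ whose restriction to each line is the grouping of $E|_L$ by decreasing degree; their ranks are $k,k+1,\ldots,r-1$.

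Next I would restrict this flag to a $p$-fiber. Over $x\in\mathbb{P}^m$ one has $p^*E|_{p^{-1}(x)}\cong E_x\otimes\mathcal{O}_{\mathbb{P}^{m-1}}$, so the $\mathcal{F}_j$ restrict to a flag of subbundles of a trivial bundle and therefore determine a morphism $\phi_x\colon\mathbb{P}^{m-1}\to A_{r-1}/P(k,k+1,\ldots,r-1)$ into the flag variety of subspaces of $E_x$ of dimensions $k,k+1,\ldots,r-1$. The complement of $\{k,\ldots,r-1\}$ in $\Delta(A_{r-1})=\{1,\ldots,r-1\}$ is the block of consecutive integers $\{1,\ldots,k-1\}$, which is exactly the shape appearing in Theorem \ref{Main1} with starting index $i=1$ and source dimension $k+1$. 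Because $1\le k\le m-2$ we have $k+1\le m-1$, so a linear $\mathbb{P}^{k+1}\subset\mathbb{P}^{m-1}$ exists; restricting $\phi_x$ to such a $\mathbb{P}^{k+1}$, case (1) of Theorem \ref{Main1} forces it to be constant. As these $\mathbb{P}^{k+1}$'s cover $\mathbb{P}^{m-1}$ and contain all its lines, $\phi_x$ contracts every line and is thus constant.

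Constancy of $\phi_x$ for every $x$ means the flag $\mathcal{F}_\bullet$ is constant along each $p$-fiber, so $\mathcal{F}_j=p^*E_j$ for a filtration $0\subset E_1\subset\cdots\subset E_{r-k}\subset E$ by subbundles on $\mathbb{P}^m$. Here $E_1$ has rank $k$ and restricts to $\mathcal{O}_L(a_1)^{\oplus k}$ on every line, so $E_1(-a_1)$ is uniform of trivial splitting type and hence trivial, giving $E_1\cong\mathcal{O}_{\mathbb{P}^m}(a_1)^{\oplus k}$; the successive quotients of the filtration are line bundles restricting to $\mathcal{O}_L(a_2),\ldots,\mathcal{O}_L(a_{r-k+1})$, hence equal to $\mathcal{O}_{\mathbb{P}^m}(a_2),\ldots,\mathcal{O}_{\mathbb{P}^m}(a_{r-k+1})$. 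Finally, since $m\ge 2$ we have $H^1(\mathbb{P}^m,\mathcal{O}(t))=0$ for all $t$, so $\mathrm{Ext}^1$ between any two of the line bundles occurring vanishes; a short induction up the filtration splits every extension and yields $E\cong\mathcal{O}(a_1)^{\oplus k}\oplus\mathcal{O}(a_2)\oplus\cdots\oplus\mathcal{O}(a_{r-k+1})$.

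The routine cohomological vanishings and the final splitting are easy; the hard part will be the two bundle-theoretic inputs used as black boxes: first, that uniformity makes the relative Harder--Narasimhan filtration a filtration by \emph{subbundles}, so that $\phi_x$ is an honest morphism, and second, the descent step identifying $\mathcal{F}_j$ with $p^*E_j$ once $\phi_x$ is constant. Matching the combinatorics of the flag type to the hypotheses of Theorem \ref{Main1} --- in particular pinning down that the relevant source dimension is $k+1$ and the starting index is $i=1$ --- is exactly where the constraint $1\le k\le m-2$ enters, and it must be verified with care.
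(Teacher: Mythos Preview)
Your proof is correct and follows the same overall strategy as the paper: build the relative Harder--Narasimhan filtration of $p^*E$ on the incidence variety, interpret its restriction to a $p$-fiber as a morphism $\mathbb{P}^{m-1}\to A_{r-1}/P(k,k+1,\ldots,r-1)$, kill that morphism with Theorem~\ref{Main1}, and descend. The organizational difference is that the paper argues by induction on $r$: it descends only the top piece $F_{r-k}$, separately shows that the rank-one quotient $p^*E/F_{r-k}$ is trivial on $p$-fibers (via global generation and $c_1=0$), pushes down to get a short exact sequence $0\to E_{r-k}\to E\to E'\to 0$ with $E'$ a line bundle, and then applies the inductive hypothesis to the uniform $(r-1)$-bundle $E_{r-k}$. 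You instead descend the entire flag at once and split directly, which is slightly more economical but requires one extra standard input the paper avoids, namely that a uniform bundle of trivial splitting type on $\mathbb{P}^m$ ($m\ge 2$) is trivial (used for $E_1(-a_1)$). You are also more explicit than the paper about why Theorem~\ref{Main1}---which literally governs maps out of $\mathbb{P}^{k+1}$---forces constancy of a map out of $\mathbb{P}^{m-1}$; the paper leaves that passage-to-linear-subspaces step implicit.
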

\begin{proof}
We prove this theorem by induction on $r$. For $1\le r\le m-1$, since every uniform $r$-bundle split (see \cite{van1971uniform, sato1976uniform} for more details), the assertion is true. Suppose the assertion is true for all uniform $(r-1)$-bundles of splitting type as above. Let's consider the following standard diagram:
\begin{align}\label{key}
	\xymatrix{
		A_m/P(1,2)\ar[d]^{p}   \ar[r]^-{q} & G(2,m+1)\\
		\mathbb{P}^m.
	}
\end{align}
For $l\in A_m/P(1,2)$, the $q$-fibre
\[\widetilde{L}={q}^{-1}(l)=\{(x,l)|x\in L\}\]
is mapped isomorphically under $p$ to the line $L$ in $\mathbb{P}^m$ determined by $l$ and we have \[p^\ast E|\widetilde{L}\cong E|L.\]
For $x\in G$, the $p$-fibre over $x$,
\[p^{-1}(x)=\{(x,l)|x\in L\}
\]
is mapped isomorphically under $q$ to $\mathbb{P}^{m-1}$. Since for any line $L$ in $\mathbb{P}^m$,
\[
E|L=\mathcal{O}_L(a_1)^k\oplus \mathcal{O}_L(a_2)\oplus\cdots\oplus\mathcal{O}_L(a_{r-k+1}),~a_1>a_2>\cdots>a_{r-k+1},
\]
there is a filtration
\[
0\subset F_1\subset\cdots\subset F_{r-k}\subset F_{r-k+1}=p^\ast E
\]
of $p^\ast E$ by subbundles $F_i$, whose restriction to $q$-fibres $\widetilde{L}$ look as follows:
\begin{align*}
F_i|\widetilde{L}\cong \mathcal{O}_{\widetilde{L}}(a_1)^k\oplus \mathcal{O}_{\widetilde{L}}(a_2)\oplus\cdots\oplus\mathcal{O}_L(a_{i}).
\end{align*}
This filtration is the relative Harder-Narasimhan filtration of $p^\ast E$ (cf. \cite{elencwajg1980fibres} page 38). Because $F_i$ are rank $(k+i-1)$ subbundles of $p^\ast E$, for every point $x\in G$, they provide a morphism
\begin{align*}
\varphi:\mathbb{P}^{m-1}=p^{-1}(x)&\rightarrow A_{r-1}/P(k,k+1,\ldots,r-1)\\
(x,l)&\mapsto F_1(x,l)\subset F_2(x,l)\subset\ldots\subset F_{r-k}(x,l)\subset E(x),
\end{align*}
where $F_i(x,l)$ are the fibres of $F_i$ over $(x,l)$. Since $1\le k\le m-2$,
$\varphi$ must be constant by Theorem \ref{Main1}. Thus $F_i$ are trivial on all $p$-fibres. On the other hand, let $F'=p^\ast E/ F_{r-k}$, then on $A_m/P(1,2)$ we have the following exact sequence
\begin{align}\label{ex}
0\rightarrow F_{r-k}\rightarrow p^\ast E\rightarrow F'\rightarrow0.
\end{align}
Because for every point $x\in G$, $F'|p^{-1}(x)$ is globally generated and $$c_1(F'|p^{-1}(x))=c_1(p^\ast E|p^{-1}(x))-c_1(F_{r-k}|p^{-1}(x))=0,$$ $F'|p^{-1}(x)$ is trivial due to \cite{du2022vector} Corollay 3.2. Since $F_{r-k}$ and $F'$ are trivial on all $p$-fibres, by the base-change theorem $E_{r-k}:=p_{\ast}F_{r-k}$ is a rank $(r-1)$ vector bundle and  $E':=p_{\ast}F'$ is a line bundle on $\mathbb{P}^m$. Moreover
 the canonical morphisms $p^{\ast}p_{\ast}F_{r-k}\rightarrow F_{r-k}$, $p^{\ast}p_{\ast}F'\rightarrow F'$ are then isomorphisms. By projecting the bundle sequence (\ref{ex}) onto $\mathbb{P}^m$ we get the exact sequence
 \begin{align}\label{ex2}
 0\rightarrow E_{r-k}\rightarrow  E\rightarrow E'\rightarrow0.
 \end{align}
 Because $E_{r-k}$ is a uniform $(r-1)$ bundle of splitting type $(a_1,\ldots,a_1,a_2,\ldots,a_{r-k})$,
  by the induction hypothesis, $E_{r-k}$ is a direct sum of line bundles. And because $E'$ is a line bundle, $H^1(\mathbb{P}^m,E'^{\vee}\otimes E_{r-k})=0$. Thus the exact sequence (\ref{ex2}) splits and hence also $E$.
\end{proof}

As a consequence of Theorem \ref{main4}, we obtain the following corollary by dualizing.
\begin{cor}
Let $E$ be a uniform $r$-bundle on $\mathbb{P}^m$ of splitting type $$(b_1,\ldots,b_{r-k},b_{r-k+1}, \ldots, b_{r-k+1}),~b_1>\cdots>b_{r-k}>b_{r-k+1}.$$
If $1\le k\le m-2$, then $E$ splits as a direct sum of line bundles.
\end{cor}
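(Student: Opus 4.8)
The plan is to deduce this corollary directly from Theorem \ref{main4} by passing to the dual bundle, as the preceding sentence indicates. The key point is that duality negates splitting types: for any line $L$, if $E|_L\cong\bigoplus_i\mathcal{O}_L(c_i)$ then $E^\vee|_L\cong\bigoplus_i\mathcal{O}_L(-c_i)$. Since the splitting type of $E$ is independent of $L$, so is that of $E^\vee$; hence $E^\vee$ is again a uniform $r$-bundle (duality preserves the rank).

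First I would compute precisely how the given splitting type transforms. The splitting type of $E$ is $(b_1,\ldots,b_{r-k},b_{r-k+1},\ldots,b_{r-k+1})$ with $b_1>\cdots>b_{r-k}>b_{r-k+1}$ and the \emph{smallest} value $b_{r-k+1}$ repeated $k$ times. Negating each entry and re-sorting in decreasing order moves this repeated value to the front and reverses all the inequalities, so the splitting type of $E^\vee$ becomes
\[
(\underbrace{-b_{r-k+1},\ldots,-b_{r-k+1}}_{k},-b_{r-k},\ldots,-b_1).
\]
Setting $a_j=-b_{r-k+2-j}$ for $1\le j\le r-k+1$, this reads $(\underbrace{a_1,\ldots,a_1}_{k},a_2,\ldots,a_{r-k+1})$ with $a_1>a_2>\cdots>a_{r-k+1}$, which is exactly the shape treated in Theorem \ref{main4}.

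Next I would apply Theorem \ref{main4} to $E^\vee$: since $1\le k\le m-2$ and $E^\vee$ is a uniform $r$-bundle of the admissible splitting type just described, the theorem yields that $E^\vee$ splits as a direct sum of line bundles. Finally, dualizing once more and using that the dual of a direct sum of line bundles is itself a direct sum of line bundles, I conclude that $E\cong(E^\vee)^\vee$ splits as a direct sum of line bundles.

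There is no real obstacle here; the corollary is a purely formal consequence of Theorem \ref{main4}. The only step requiring any care is the bookkeeping above — checking that negation sends the bottom-repeated splitting type to a top-repeated one with the same multiplicity $k$ and the same rank $r$, so that the hypotheses of Theorem \ref{main4} are met verbatim. Once that identification is in place, the conclusion follows immediately.
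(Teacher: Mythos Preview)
Your proposal is correct and follows exactly the approach the paper indicates: the corollary is stated without proof beyond the remark that it follows from Theorem~\ref{main4} ``by dualizing,'' and you have supplied precisely that duality argument with the appropriate bookkeeping on the splitting type.
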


The proof of Theorem \ref{main4} suggests that we may be able to judge whether the maps from projective spaces to flag varieties are constant in terms of the possible splitting type of uniform bundles. Theorem \ref{main2} in our paper and \cite{bakshi2023morphisms} Theorem 1.5 allow us to ask the following question.
\begin{qu}
With the notations as Theorem \ref{Main1}. Let $1<i<n-m+3$.
\begin{enumerate}
	\item[(a)] If $m$ is even, is any morphism from $\mathbb{P}^m$ to $A_n/P(I_i)$ constant?
	\item[(b)] If $m$ is odd, is any morphism from $\mathbb{P}^{m+1}$ to $A_n/P(I_i)$ constant?
\end{enumerate}
\end{qu}

\section*{Acknowledgements}
All the authors thank Professor Rong Du and Changzheng Li for giving some useful suggestions.

\bibliographystyle{abbrv}

\bibliography{ref}
\end{document}